\def\BState{\State\hskip-\ALG@thistlm}
\numberwithin{equation}{section}
\newtheorem{remark}{Remark}[section]
\NewDocumentCommand{\dgal}{sO{}m}{%
  \IfBooleanTF{#1}
    {\dgalext{#3}}
    {\dgalx[#2]{#3}}%
}
\NewDocumentCommand{\dgalext}{m}{%
  \sbox0{%
    \mathsurround=0pt 
    $\left\{\vphantom{#1}\right.\kern-\nulldelimiterspace$%
  }%
  \sbox2{\{}%
  \ifdim\ht0=\ht2
    \{\kern-.45\wd2 \{#1\}\kern-.45\wd2 \}%
  \else
    \left\{\kern-.5\wd0\left\{#1\right\}\kern-.5\wd0\right\}%
  \fi
}
\NewDocumentCommand{\dgalx}{om}{%
  \sbox0{\mathsurround=0pt$#1\{$}%
  \sbox2{\{}%
  \ifdim\ht0=\ht2
    \{\kern-.45\wd2 \{#2\}\kern-.45\wd2 \}%
  \else
    \mathopen{#1\{\kern-.5\wd0 #1\{}
    #2
    \mathclose{#1\}\kern-.5\wd0 #1\}}
  \fi
}
\newcommand{\R}{\mathbb{R}}
\newcommand{\bk}{{\bf k}}
\newcommand{\bK}{{\bf K}}
\newcommand{\bv}{{\bf v}}
\newcommand{\bx}{{\bf x}}
\newcommand{\LA}{{\mathcal{L}^A}}
\newcommand{\LW}{{\mathcal{L}^W}}
\newcommand{\kpar}{{k_{\parallel}}}
\newcommand{\I}{\mathrm{i}}
\title{Bloch theory-based gradient recovery method for computing topological edge modes in photonic graphene}
\author{Hailong Guo\thanks{School of Mathematics and Statistics,  The University of Melbourne,  Parkville, VIC 3010, Australia   (hailong.guo@unimelb.edu.au).}
\and %
Xu Yang\thanks{Department of Mathematics, University of California, Santa Barbara, CA, 93106, USA (xuyang@math.ucsb.edu). }
\and%
Yi Zhu\thanks{Zhou Pei-Yuan Center for Applied Mathematics, Tsinghua University, Beijing, 100084, People's Republic of China (yizhu@mail.tsinghua.edu.cn).}
}
\begin{document}

\maketitle

%
%
\medskip

\begin{abstract}
Photonic graphene, a photonic crystal with honeycomb structures, has been intensively studied in both theoretical and applied fields. Similar to graphene which admits Dirac Fermions and topological edge states, photonic graphene supports novel and subtle propagating modes (edge modes) of electromagnetic waves. These modes have wide applications in many optical systems. In this paper, we propose a novel gradient recovery method based on Bloch theory for the computation of topological edge modes in photonic graphene. Compared to standard finite element methods, this method provides higher order accuracy with the help of gradient recovery technique. This high order accuracy is desired for constructing the propagating electromagnetic modes in applications. We analyze the accuracy and prove the superconvergence of this method. Numerical examples are presented to show the efficiency by computing the edge mode for the $\mathcal{P}$-symmetry and $\mathcal{C}$-symmetry breaking cases in honeycomb structures.

\vskip .3cm
{\bf AMS subject classifications.} \ {}
\vskip .3cm

{\bf Key words.} \ {Gradient recovery, superconvergence, edge mode, honeycomb structure, topological photonic}
\end{abstract}

\section{Introduction}

 Graphene has been one of the popular research topics in different theoretical and applied fields in the past two decades \cite{geim2007rise}. Its success inspires a lot of analogs (referred to as ``artificial graphene") which are two-dimensional systems with similar properties to graphene \cite{Singha_11,Rechtsman-etal:13,Shvets-PTI:13,MKW:15,yang2015topological,wu2015scheme}. Among those analogs,  photonic graphene, a photonic crystal with honeycomb structures, has attracted great interest recently \cite{Segev07prl,ablowitz2009conical,Rechtsman-etal:13}. Similar to graphene which admits Dirac fermions and topological edge states, photonic graphene supports novel and subtle propagating localized modes of electromagnetic waves. These modes are the main research objects in topological photonics and have large applications in many optical systems \cite{lu2014topological,lu2016topological}, and thus it is crucial to understand such interesting propagating modes. This brings opportunities and challenges to both applied and computational mathematics.

The propagation of electromagnetic waves in media is governed by the Maxwell equations in three spatial dimensions. Thanks to the symmetries of photonic crystals, the in-plane propagating electromagnetic modes can be described by the following eigenvalue problem in $L^2(\mathbb{R}^2)$ \cite{LWZ2017Honeycomb},
\begin{equation}\label{eq:eigen}
\LW \Psi \equiv -\nabla\cdot W(\bx)\nabla  \Psi = E \Psi, \quad \bx \in \mathbb R^2.
\end{equation}
Physically, $\Psi(\bx)$ represents the propagating mode of electromagnetic waves, the eigenvalue $E$ is related to the frequency of the wave, and the positive definite Hermitian matrix function $W(\bx)$ corresponds to the material weight of the media; see \cite{Joannopoulos:08,LWZ2017Honeycomb} for details.

If the medium is a perfect photonic crystal, the material weight $W(\bx)$ is periodic. To obtain novel propagation modes, a bulk photonic crystal is often modulated by different types of defects which break the periodicity of the medium. For instance, in this work, we will consider a photonic graphene modulated by a domain wall defect. In this setup, there exist the so-called topological edge states. In some proper asymptotic regimes, the existence and dynamics can be explicitly analyzed with a rigorous asymptotic analysis, see for instance in \cite{Ablowitz_Zhu_HC_12,Ablowitz_Curtis_Zhu_2012,LWZ2017Honeycomb}. However, in a generic parameter regime, one needs to resort to numerical computation to investigate the existence and study the properties of electromagnetic modes.


The numerical challenge of the eigenvalue problem \eqref{eq:eigen} in photonic lattice lies in the lattice structure. For bulk geometry, $W(\bx)$ is periodic and the eigenfunction $\Psi$ is quasi-periodic (periodic up to a phase) in each lattice,  the spectral method is usually used after applying the Bloch theory \cite{BeLiPa:78} when the material weight is smooth. However, when one introduces the domain-wall modulated defect to break the symmetry of geometry which leads to the appearance of edge modes, the spectral method is no longer a good option due to the loss of symmetry and quasi-periodic boundary conditions in the lattice. Since $\LW$ has a divergence form, finite element method comes to be a natural choice. A standard finite element method lead to that the numerical eigenfuntions and their gradients have different accuracy. In applications, the eigenfunction of \eqref{eq:eigen} usually represents the longitudinal electric/magnetic components and the transverse components are the gradients of the eigenfunctions. It is very important to accurately compute the mode $\Psi(\bx)$ and its gradient in order to construct the full electromagnetic fields under propagation \cite{LWZ2017Honeycomb}, and therefore a finite element method with high order accuracy in gradient is desired for the computation of \eqref{eq:eigen}.

Gradient recovery methods are one of the major postprocessing techniques based on finite element methods, which are able to provide superconvergent gradient and asymptotically exact {\it a posteriori} error estimators \cite{AinsworthOden2000, Babuska1994, Carstensen2002, NagaZhang2004, ZZ1987,ZZ1992a,ZZ1992b}, anisotropic mesh adaption \cite{Perotto2001, Perotto2003, HuangRussell2011}, and enhancement of eigenvalue approximation \cite{GuoZhangZhao2017, NagaZhangZhou2006, WuZhang2009}.  Recently, recovery techniques are used to construct new finite element methods for higher order partial differential equations \cite{ChenGuoZhangZou2017, GZZ2018, GZZ2018b}.  A famous example of gradient recovery methods is the Superconvergent Patch Recovery (SPR) proposed by Zienkiewicz and Zhu \cite{ZZ1992a}, also known as ZZ estimator, which has become a standard tool in many commercial Finite Element software such as ANSYS,  Abaqus,  and LS-DYNA.  An important alternative is the polynomial preserving recovery (PPR) proposed by Zhang and Naga \cite{ZhangNaga2005}, which improved the performance of SPR on chevron pattern uniform mesh. It has also been implemented by commercial Finite Element software COMSOL Multiphysics as a superconvergence tool. However, direct application of gradient recovery methods to \eqref{eq:eigen} leads to huge computational cost due to the existence of lattice structure.

In this paper, we consider the honeycomb lattice structure and develop a gradient recovery method based on Bloch theory. We apply the Bloch theory in the direction that has no domain-wall modulated defect, and then use the gradient recovery method to solve the eigenvalue problem for each wave number. Compared to standard finite element methods, this method provides higher order accuracy with the help of gradient recovery technique. We analyze the accuracy and prove the superconvergence of this method. We also compute the edge modes for the $\mathcal{P}$-symmetry and $\mathcal{C}$-symmetry breaking cases in honeycomb structures to show the efficiency of the method. Our results are consistent with the analytical results given in \cite{LWZ2017Honeycomb}.

The rest of the paper is organized as follows. In Section~\ref{sec:pre}, we introduce the problem background on photonic graphene, Dirac points and edge modes and the Bloch-Floquet theory; In Section~\ref{sec:algorithm}, we propose the gradient recovery method {based on Bloch theory}, analyze the accuracy and prove the superconvergence of the method; numerical examples of computing $\mathcal{P}$-symmetry and $\mathcal{C}$-symmetry breaking cases in honeycomb structures are presented in Section~\ref{sec:example} to show the efficiency, and we give conclusive remarks in Section~\ref{sec:conclusion}.

\section{Preliminary}\label{sec:pre}
In this section, we summarize basic properties of the photonic graphene, Dirac points and edge states as a description of problem background, and refer interested readers to \cite{LWZ2017Honeycomb} and references therein for more details.

\subsection{Honeycomb-structured material weight}
A perfect photonic graphene has a honeycomb structured material weight, \emph{i.e.}, $W(\bx)=A(\bx)$, with the honeycomb structured material weight $A(\bx)$ mathematically satisfies
\begin{enumerate}
\item $A(\bx)$ is Hermitian, positive definite, uniform elliptic;
\item $A(\bx+\bv)=A(\bx)$ for all $\bx\in\R^2$ and $\bv\in\Lambda_h$;
\item  $\overline{A(-\bx)}=A(\bx)$; ($\mathcal{PC}$-invariance)
\item $A(R^*\bx)=R^*A(\bx)R$; ($\mathcal{R}$-invariance) \\
 \end{enumerate}
Here, the honeycomb lattice $\Lambda_h$ is a hexagonal lattice generated by, \emph{e.g.},
 \[
 \bv_1=\begin{pmatrix} \frac{\sqrt{3}}{2} \\  \\ \frac{1}{2}\end{pmatrix},\quad
 \bv_2=\begin{pmatrix}\frac{\sqrt{3}}{2} \\ \\ -\frac{1}{2} \end{pmatrix}\]
 with their dual lattice vectors
 \[ \bk_1=\frac{4\pi}{\sqrt{3}}\begin{pmatrix}\frac{1}{2} \\ \\ \frac{\sqrt{3}}{2}\end{pmatrix},\quad
 \bk_2=\frac{4\pi}{\sqrt{3}}\begin{pmatrix}\frac{1}{2}\\ \\ -\frac{\sqrt{3}}{2}\end{pmatrix} ;\]
 $R$ is a $2\times 2$ matrix that rotates a vector in $\mathbb{R}^2$ clockwise by
$2\pi/3$ about  $\bx={\bf 0}$:
\begin{equation}\label{Def_R}
R=\begin{pmatrix}-\frac{1}{2} & \frac{\sqrt{3}}{2}\\[1.5 ex ]-\frac{\sqrt{3}}{2} & -\frac{1}{2}\end{pmatrix}.
\end{equation}
We have also used the conventions: $\mathcal{P}$ stands for the parity inversion operator, \emph{i.e.}, $\mathcal{P}[f](\bx)=f(-\bx)$; $\mathcal{C}$ stands for the complex conjugate operator, \emph{i.e.}, $\mathcal{C}[f](\bx)=\overline{f(\bx)}$; $\mathcal{R}$ stands for the rotation operator, \emph{i.e.}, $\mathcal{R}[f](\bx)=f(R^*\bx)$.


\begin{remark} Condition 1 states the basic requirements for a loss-free material weight, which ensue that the second order differential operator $\LA$ associated with the material weight $A(\bx)$ is self-adjoint and elliptic. Condition 2 implies that the Bloch theory applies and Conditions 3, 4 imply the commutators between $\LA$ and the symmetry operators vanish, \emph{i.e.},  $[\mathcal{PC}, \LA]=0$ and $[\mathcal{R}, \LA]=0$.
\end{remark}

Simply speaking, photonic graphene is just an optic media with a hexagonally periodic, $\mathcal{PC}-$ and $\mathcal{R}$-invariant material weight. A honeycomb structured material weight $A(\bx)$ defined above is generically anisotropic and complex. The full characterization of its Fourier series is given in Section 3.4 of \cite{LWZ2017Honeycomb}. The simplest nonconstant honeycomb structured media containing the lowest Fourier components is of the form
\begin{equation}\label{honeycombpoten}
\begin{split}
A(\bx)=&a_0I+C~ e^{\I\bk_1\cdot \bx}+ R C R^* ~ e^{\I\bk_2\cdot \bx} + R^* C R ~e^{\I(-\bk_1-\bk_2)\cdot \bx} \\
&+ C^T ~e^{-\I\bk_1\cdot \bx}+ R C^T R^* ~e^{-\I\bk_2\cdot \bx} + R^* C^T R ~\frac{}{}e^{\I(\bk_1+\bk_2)\cdot \bx},
\end{split}
\end{equation}
where $C$ could be any real $2\times2$ matrix and $a_0$ is a positive constant ensuring that $A(\bx)$ is positive definite. If $C$ is symmetric, then $A(\bx)$ is real. For most natural materials, the material weight is real. However, for meta-materials, the effective material weight can be complex, see for instance \cite{Shvets-PTI:13}. If $C=a I_{_{2\times2}}$, then $A(\bx)$ represents an isotropic material.


\subsection{Bloch-Floquet theory and Dirac points}
%
%

According to the Bloch-Floquet theory on the elliptic operator with periodic coefficients, the Bloch modes propagating in a perfect photonic graphene satisfy
\begin{equation}
\begin{split}
&\LA \Phi(\bx)= E\Phi(\bx),\\
&\Phi(\bx+\bv)=e^{\I\bk\cdot\bv}\Phi(\bx), \quad \bv\in\Lambda_h
\end{split}
\end{equation}
Here the quasi-momentum $\bk$ takes the value in the Brillouin Zone $\mathbb{B}_h$.  For each $\bk$, the above eigenvalue problem has discrete spectrum $E_1(\bk)\le E_2(\bk)\le E_3(\bk)\le \cdots$ and the corresponding eigenfunctions, referred to as Bloch modes, are of the form $\Phi_j(\bx)=e^{\I\bk\cdot\bx}p_j(\bx), ~j=1,2,\cdots$ with $p_j(\bx)$ are $\Lambda_h$ periodic.

Let $\bK=\frac{1}{3}(\bk_1-\bk_2)$ and $\bK'=-\bK$. It is shown in \cite{LWZ2017Honeycomb} that if $A(\bx)$ is a honeycomb structured material weight, there exists two dispersion bands $E_b(\bk)$ and $E_{b+1}(\bk)$ intersecting each other at $\bK$ and $\bK'$ and the dispersion relations are conical nearby. These degenerate points at the dispersion bands, $(E_{b}(\bK_\star), \bK_\star)$, $\bK_\star=\bK, \bK'$,  are referred to as the Dirac points. Dirac points are unstable under $\mathcal{PC}$-symmetry breaking perturbations. Namely, if $\LA$ has a Dirac point at $\bK_\star$ with the Dirac energy $E_D$, then $\mathcal{L}^{A+\delta B}\equiv-\nabla\cdot (A(\bx)+\delta B(\bx))\nabla$ has no Dirac points at $\bK_\star$ near the energy $E_D$ provided $B(\bx)$ is NOT $\mathcal{PC}-$invariant. Specifically, the two intersecting bands at $\bK_\star$ separate and a local spectrum gap opens. There are two simple ways to break the $\mathcal{PC}$-symmetry:
\begin{enumerate}[(1)]
\item $B(\bx)$ preserves $\mathcal{C}$-symmetry but break the $\mathcal{P}$-symmetry. In other words, $B(\bx)$ is real and odd. A simple example is
    \begin{equation}\label{Pbreaking}
    B(\bx)=[\sin(\bk_1\cdot \bx)+\sin(\bk_2\cdot \bx)+\sin(\bk_3\cdot \bx)]I_{_{2\times2}}
    \end{equation}
\item $B(\bx)$ preserves $\mathcal{P}$-symmetry but break the $\mathcal{C}$-symmetry. In other words, $B(\bx)$ is purely imaginary and even. A simple example is
      \begin{equation}\label{Cbreaking}
    B(\bx)=[\cos(\bk_1\cdot \bx)+\cos(\bk_2\cdot \bx)+\cos(\bk_3\cdot \bx)]\sigma_2
    \end{equation}
    where $\sigma_2$ is the second Pauli matrix, {\it i.e.}, $\displaystyle \sigma_2=\left(\begin{matrix} 0 & -\I \\ \I & 0 \end{matrix}\right)$.
\end{enumerate}

\subsection{Domain wall modulated photonic graphene}An interesting phenomenon of the perfect photonic graphene is the conical diffraction, \emph{i.e.}, the wave packets associated with the Dirac points propagate conically in the media \cite{FW:12, LWZ2017Honeycomb}. Due to the potential applications, localized and chiral propagations of electromagnetic waves is one of the main research topics related to the so-called topological materials. This can be achieved in the photonic graphene modulated by a domain wall. 
Specifically, we have the following setup:
\begin{enumerate}
\item \emph{Perfect photonic graphene}: Let $A(\bx)$ be a honeycomb structured material weight. Let $\bK_\star=\bK$ or $\bK'$, and assume that $(\bK_\star,E_D)$ is a Dirac point of the operator $\mathcal{L}^A=-\nabla\cdot A\nabla$.

\item \emph{Two perturbed bulk mediums with opposite topological phases}: Let $B(\bx)$ be a $\Lambda_h-$periodic, $2\times 2$ Hermitian matrix such that $\overline{B(-\bx)}=-B(\bx)$. The perturbed operator $\mathcal{L}^{A\pm \delta \eta_\infty B}\equiv -\nabla\cdot [A(\bx)\pm\delta \eta_\infty B(\bx)]\nabla $ has no Dirac points near $(\bK_\star, E_D)$ and a local spectrum gap opens.

\item \emph{Connecting two mediums with a domain wall}: Let $\eta(\zeta)$ be a real bounded function with $\eta(\pm \infty)=\pm \eta_\infty$, for instance, $\eta(\zeta)=\eta_\infty \tanh(\zeta)$.  The two perturbed bulk mediums are connected by the domain wall $\eta(\zeta)$ along one direction (referred as the edge), for example, the normal direction of the edge is $\bk_2$. In other words, the material weight under consideration becomes $W(\bx)=A(\bx)+\delta\eta(\delta \bk_2\cdot \bx)B(\bx)$.
\end{enumerate}


 Our model of a honeycomb structure with an edge is the domain-wall modulated operator:
\begin{equation}
 \label{dw_ham}
 \mathcal{L}^W \equiv -\nabla\cdot\left[A(\bx) +\delta\eta( \delta\bk_2 \cdot \bx)B(\bx)\right]  \nabla.
\end{equation}
The operator $\mathcal{L}^{W}$ breaks translation invariance with respect to arbitrary elements of the lattice, $\Lambda_h$, but  is invariant with respect to translation by $\bv_1$, parallel to the edge (because $\bk_2 \cdot\bv_1=0$ in \eqref{dw_ham}). Associated with this translation invariance is a parallel quasi-momentum, which we denote by $\kpar$. Note that $\kpar$ takes that value in $[0,2\pi]$.

Edge states are solutions of the eigenvalue problem
\begin{align}
&\mathcal{L}^{W}\Psi(\bx;\kpar) = E(\kpar)\Psi(\bx;\kpar), \label{equ:dw_evp}\\
&\Psi(\bx+\bv_1;\kpar)=e^{\I\kpar}\Psi(\bx;\kpar),\label{equ:pseudo-per}\\
&\Psi(\bx;\kpar) \to 0\ \ {\rm as}\ \ |\bx\cdot\bk_2|\to\infty. \label{equ:localized} .
\end{align}
We refer to a solution pair $(E(\kpar),\Psi(\bx;\kpar))$ of \eqref{equ:dw_evp}--\eqref{equ:localized} as an edge state or edge mode.

In \cite{LWZ2017Honeycomb}, the existence of the edge states at $\kpar=\bK_\star\cdot\bv_1$ in the parameter regime $\delta\ll 1$ is proved and the asymptotic forms of the edge states are given. However, in applications, $\delta$ is not small and all edge states (not just near $\bK_\star\cdot\bv_1$) are useful. Analytical techniques can not achieve this object, and thus numerical methods are required.

\section{Gradient recovery method}\label{sec:algorithm}
In this section, we introduce the Bloch-theory based gradient recovery method to solve \eqref{equ:dw_evp}-\eqref{equ:localized}.

\subsection{Simplified model problem}
Let $\Sigma = \mathbb{R}^2/\mathbb{Z}\bv_1$ be a cylinder.  The fundamental domain for $\Sigma$ is
$\Omega_{\Sigma} \equiv \{ \tau_1\bv_1+\tau_2\bv_2: 0 \le \tau_1 \le 1, \tau_2 \in \mathbb{R}\}$.
Let   $\Psi(\mathbf{x};   k_{\parallel}) = e^{\I \frac{k_{\parallel}}{2\pi}\mathbf{k}_1\cdot\mathbf{x}}p(\mathbf{x};   k_{\parallel})$.
 Then \eqref{equ:dw_evp}--\eqref{equ:localized} is equivalent
to the following  eigenvalue problem
\begin{align}
&\mathcal{L}^W(k_{\parallel})p(\mathbf{x}; k_{\parallel}) = E(k_{\parallel})p(\mathbf{x};k_{\parallel}),\label{equ:eigen} \\
&p(\mathbf{x}+\mathbf{v}_1; k_{\parallel}) =p(\mathbf{x}; k_{\parallel}), \label{equ:per} \\
&p(\mathbf{x}; k_{\parallel}) \rightarrow 0  \text{  as  } |\mathbf{x}\cdot\mathbf{k}_2 | \rightarrow \infty. \label{equ:inf}
\end{align}
where
\begin{equation}\label{equ:def}
\mathcal{L}^W(k_{\parallel}) = - (\nabla + \I\frac{k_{\parallel}}{2\pi}\mathbf{k}_1)\cdot W(\nabla + \I\frac{k_{\parallel}}{2\pi}\mathbf{k}_1).
\end{equation}
It is easy to see that $\mathcal{L}^W(k_{\parallel})$ is a self-adjoint operator.

To compute the edge mode,  it suffices to consider the spectrum of the operator $ \mathcal{L}^W(k_{\parallel}) $
on the truncated domain
\begin{equation}\label{equ:approxdomain}
\Omega_{\Sigma, L} \equiv \left\{ \tau_1\bv_1+\tau_2\bv_2: 0 \le \tau_1 \le 1, -L\le \tau_2\le L\right\}.
\end{equation}
 Let $W^{k, p}(\Omega_{\Sigma, L})$ be the Sobolev
spaces of functions defined  on $\Omega_{\Sigma, L}$ with norm $\|\cdot\|_{k, p}$ and
seminorm $|\cdot|_{k, p}$.
To incorporate the  boundary conditions, we define
\begin{equation}
W^{k, p}_{per}\equiv \{ \Psi:  \Psi \in W^{k, p}(\Omega_{\Sigma, L})  \text{  and   } \Psi(\mathbf{x}+\bv_1) = \Psi(\mathbf{x})\}.
\end{equation}
and
\begin{equation}
W^{k, p}_{per, 0}\equiv \{ \Psi:  \Psi \in  W^{k, p}_{per}  \text{  and   } \Psi(\pm L\bv_2) = 0\}.
\end{equation}
When $p=2$, it is simply denoted as $H^k_{per}$ or  $H^k_{per,0}$.

The variational formulation of  is \eqref{equ:eigen}-- \eqref{equ:inf}  to find the eigenpair $(E(k_{\parallel}), \Psi(\mathbf{x};k_{\parallel}))\in \mathbb{R}\times H^1_{per, 0}$ such that
\begin{equation}\label{equ:eigenvar}
a(p, q) = E(k_{\parallel})(p, q) , \quad \forall q \in H^1_{per, 0},
\end{equation}
where the bilinear form $a(\cdot, \cdot)$ is defined as
\begin{equation}
a(p, q) = \int_{\Omega_{\Sigma, L}}W(\mathbf{x})(\nabla +  \I\frac{k_{\parallel}}{2\pi}\mathbf{k}_1)p(\mathbf{x})
 \cdot \overline{(\nabla +  \I\frac{k_{\parallel}}{2\pi}\mathbf{k}_1)q(\mathbf{x})}d\mathbf{x},
\end{equation}
and the inner product is defined as
\begin{equation}
(p, q)  = \int_{\Omega_{\Sigma, L}}p(\mathbf{x})\overline{q(\mathbf{x})}d\mathbf{x}.
\end{equation}
It is easy to see that the bilinear $a(\cdot, \cdot)$  is symmetric and elliptic.  According to the spectral  theory of linear operator,
we know that \eqref{equ:eigenvar}  has a countable sequence of real
eigenvalues $0 < E_1(k_{\parallel}) \le E_2(k_{\parallel}) \le E_3(k_{\parallel}) \le \cdots \rightarrow \infty$
and the corresponding eigenfunctions $p_1(\mathbf{x}; k_{\parallel}), p_2(\mathbf{x}; k_{\parallel}), p_3(\mathbf{x}; k_{\parallel}), \cdots$
are assumed to satisfy
 $$a\left(p_i(\mathbf{x}; k_{\parallel}), p_j(\mathbf{x}; k_{\parallel})\right)
= E_i(k_{\parallel}) \left(p_i(\mathbf{x}; k_{\parallel}\right), p_j(\mathbf{x}; k_{\parallel})) = \delta_{ij}E_i(k_{\parallel}).$$

\subsection{Finite element approximation}
To simplify the imposing of the periodic boundary, we shall  consider  the uniform  triangulation of $\Omega_{\Sigma, L}$.
To generate a uniform triangulation  $\mathcal{T}_h$  with mesh size $h = \frac{\|\mathbf{v}_1\|}{N}$
of $\Omega_{\Sigma, L}$,  we firstly divide  $\Omega_{\Sigma, L}$ into
$2LN^2$ sub-rhombuses with mesh size $h = \frac{\|\mathbf{v}_1\|}{N}$ and divide each sub-rhombus  into two  triangles.
We define the standard linear finite element space  with periodic boundary condition in  $\mathbf{v}_1$ as
\begin{equation}
V_h = \left\{ v\in C(\Omega_{\Sigma, L}): q|_{T}\in \mathbb{P}_1(T), \forall T\in \mathcal{T}_h
\text{   and  }  q(\mathbf{x}+\bv_1) = q(\mathbf{x}) \right\}.
\end{equation}
with $\mathbb{P}_k$ being the space consisting of  polynomials of degree up to $k$  and the corresponding finite element space with homogeneous boundary condition in  $\mathbf{v}_2$ as
\begin{equation}
V_{h,0} = V_h \cap H^1_{per, 0}.
\end{equation}

The finite element discretization of  the eigenvalue problem  \eqref{equ:eigenvar} is to find  the eigenpair
$(E_h(k_{\parallel}), p_h(\mathbf{x};k_{\parallel}))\in \mathbb{R}\times  V_{h,0}$ such that
\begin{equation}\label{equ:eigenfem}
a(p_h, q_h) = E_h(k_{\parallel})(p_h, q_h), \quad \forall q_h \in V_{h,0}.
\end{equation}
Similar as \eqref{equ:eigenvar}, \eqref{equ:eigenfem}
has a finite sequence of eigenvalues $0 < E_{1, h}(k_{\parallel})\le E_{2, h}(k_{\parallel})\le \cdots
\le  E_{n_h, h}(k_{\parallel})$ and the corresponding eigenfunctions are assumed to satisfy
 $$a\left(p_{i,h}(\mathbf{x}; k_{\parallel}), p_{j,h}(\mathbf{x}; k_{\parallel})\right)
= E_{i,h}(k_{\parallel}) \left(p_{i,h}(\mathbf{x}; k_{\parallel}), p_{j,h}(\mathbf{x}; k_{\parallel})\right) = \delta_{ij}E_{i,h}(k_{\parallel}).$$

For the finite element approximation,  the following error estimates is well established in \cite{Babuska2001, Strang2008}
\begin{theorem}\label{thm:approxerror}
Suppose $p_i(\mathbf{x}; k_{\parallel}) \in  H^2_{per,0}$.  Then we have
\begin{align}
&E_i(k_{\parallel}) \le E_{i,h}(k_{\parallel}) \le E_i(k_{\parallel}) + Ch^2; \\
&\| p_i - p_{i,h}\|_{1} \le Ch;\\
&\| p_i - p_{i,h}\|_{0} \le Ch^2.
\end{align}

\end{theorem}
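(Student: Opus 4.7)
The plan is to apply the classical spectral approximation theory for conforming Galerkin discretizations of self-adjoint elliptic eigenvalue problems, adapting the standard arguments to the present complex-valued, quasi-periodic setting. Since $V_{h,0} \subset H^1_{per,0}$, the discretization is conforming, and the bilinear form $a(\cdot,\cdot)$ is symmetric, continuous, and coercive on $H^1_{per,0}$ (the latter after quotienting out the quasi-periodic phase, which only shifts the lower-order term). This brings the problem squarely within the Babuska--Osborn framework.

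First, I would write down the Courant--Fischer min-max characterization
\begin{equation*}
E_i(\kpar) = \min_{\substack{S \subset H^1_{per,0} \\ \dim S = i}} \max_{0\neq v \in S} \frac{a(v,v)}{(v,v)},
\qquad
E_{i,h}(\kpar) = \min_{\substack{S_h \subset V_{h,0} \\ \dim S_h = i}} \max_{0\neq v \in S_h} \frac{a(v,v)}{(v,v)}.
\end{equation*}
Since every admissible trial space $S_h$ in the discrete problem is also admissible in the continuous problem, the inequality $E_i(\kpar) \le E_{i,h}(\kpar)$ is immediate. This gives the lower bound in the first estimate.

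Next I would prove the $H^1$ error bound by a Céa-type argument for the associated source problem. Introduce the solution operator $T: L^2 \to H^1_{per,0}$ of the shifted elliptic problem and its Galerkin counterpart $T_h: L^2 \to V_{h,0}$; on $V_h$ I would use the nodal (Lagrange) interpolant $I_h$, which on a uniform triangulation of $\Omega_{\Sigma,L}$ preserves the $\bv_1$-periodicity and zero trace on $\pm L\bv_2$, and satisfies $\|v - I_h v\|_1 \le C h \|v\|_2$ for $v \in H^2_{per,0}$. Assuming the $H^2$-regularity $p_i \in H^2_{per,0}$ hypothesized in the theorem, the standard best-approximation estimate gives $\|T - T_h\|_{\mathcal{L}(L^2,H^1)} \to 0$ at rate $h$. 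The eigenfunction approximation bound $\|p_i - p_{i,h}\|_1 \le Ch$ then follows from the Babuska--Osborn perturbation theorem applied to the compact selfadjoint operators $T$ and $T_h$, using that the eigenvalues of $\mathcal{L}^W(\kpar)$ are the reciprocals of those of $T$.

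For the $L^2$ error and the $O(h^2)$ upper bound on $E_{i,h}$, I would invoke the Aubin--Nitsche duality trick. The dual problem (with right-hand side $p_i - p_{i,h} \in L^2$) is again a shifted elliptic equation on $\Omega_{\Sigma,L}$ with periodic/homogeneous boundary conditions, whose solution inherits $H^2$-regularity from the regularity of the coefficient $W$ and of the domain $\Omega_{\Sigma,L}$. Testing against $p_i - p_{i,h}$ and using Galerkin orthogonality together with the $H^1$ bound just established yields $\|p_i - p_{i,h}\|_0 \le C h \|p_i - p_{i,h}\|_1 \le C h^2$. Finally, the well-known identity
\begin{equation*}
E_{i,h}(\kpar) - E_i(\kpar) = a(p_i - p_{i,h}, p_i - p_{i,h}) - E_i(\kpar)\,(p_i - p_{i,h}, p_i - p_{i,h})
\end{equation*}
combined with the continuity of $a(\cdot,\cdot)$ and the $H^1$ bound yields the matching $O(h^2)$ upper bound on $E_{i,h}(\kpar)$.

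The main technical obstacle is verifying that the classical real-valued Babuska--Osborn theory transfers cleanly to this complex, quasi-periodic, shifted setting: specifically, confirming the $H^2$ elliptic regularity of the source problem on the cylinder-with-strip domain $\Omega_{\Sigma,L}$ under mixed $\bv_1$-periodic / $\pm L\bv_2$-Dirichlet boundary conditions (needed for both the interpolation and duality estimates), and checking that the nodal interpolant on the constructed uniform rhombic triangulation genuinely maps $H^2_{per,0}$ into $V_{h,0}$ with the optimal approximation rate. Once these two points are established, the remaining steps are routine.
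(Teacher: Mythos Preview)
Your proposal is correct and follows the classical Babu\v{s}ka--Osborn route (min-max for the lower bound, C\'ea/best-approximation plus interpolation for the $H^1$ bound, Aubin--Nitsche duality for the $L^2$ bound, and the eigenvalue--eigenfunction identity of Lemma~\ref{lem:identity} for the $O(h^2)$ upper bound). The paper, however, does not actually prove this theorem: it simply states that the estimates are ``well established'' and cites the standard references \cite{Babuska2001, Strang2008}. So there is nothing to compare---your sketch is precisely the argument one would find in those references, and the technical checks you flag (elliptic regularity on the strip with mixed periodic/Dirichlet conditions, and that $I_h$ maps $H^2_{per,0}$ into $V_{h,0}$ on the uniform rhombic mesh) are the only points requiring any adaptation to the present setting.
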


The following property of the eigenvalue and eigenfunction approximation will be used in the analysis.

\begin{lemma}\label{lem:identity}
 Let $(E(k_{\parallel}), p(\mathbf{x};k_{\parallel}))$ be the solution of  of  the eigenvalue
 problem \eqref{equ:eigenvar}.  Then for any  $q\in  H^1_{per, 0}$, we have
  \begin{equation}\label{equ:identity}
\frac{a(p,q)}{\|q\|^2_{0}}
-  E(k_{\parallel}) = \frac{a(p-q, p-q)}{\|q\|^2_{0}}
- E(k_{\parallel}) \frac{\|p-q\|^2_{0}}{\|q\|^2_{0}}.
\end{equation}
\end{lemma}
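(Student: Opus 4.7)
The plan is to multiply \eqref{equ:identity} through by $\|q\|_0^2$ and prove the equivalent identity
\[
a(q,q) - E(k_{\parallel})\,\|q\|_0^2 \;=\; a(p-q,p-q) - E(k_{\parallel})\,\|p-q\|_0^2,
\]
from which the stated form follows by dividing back by $\|q\|_0^2$. (I read the left-hand side of \eqref{equ:identity} as the Rayleigh quotient $a(q,q)/\|q\|_0^2$.)

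First I would expand the right-hand side by sesquilinearity of $a(\cdot,\cdot)$ and of the $L^2$-inner product, writing $a(p-q,p-q) = a(p,p) - a(p,q) - a(q,p) + a(q,q)$ and $\|p-q\|_0^2 = \|p\|_0^2 - (p,q) - (q,p) + \|q\|_0^2$. Subtracting $E(k_{\parallel})$ times the second expansion from the first and grouping matched pairs yields four brackets of the form $a(\cdot,\cdot) - E(k_{\parallel})(\cdot,\cdot)$, with arguments $(p,p)$, $(p,q)$, $(q,p)$, and $(q,q)$.

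The next step is to argue that three of the four brackets vanish. The $(p,p)$-bracket is zero because $(E(k_{\parallel}),p)$ is an eigenpair, so $a(p,p) = E(k_{\parallel})\|p\|_0^2$. The $(p,q)$-bracket is zero by the variational eigenvalue equation \eqref{equ:eigenvar} tested against $q \in H^1_{per,0}$. The $(q,p)$-bracket is the complex conjugate of the $(p,q)$-bracket: this uses the Hermitian symmetry $a(q,p) = \overline{a(p,q)}$ (which in turn follows from the pointwise Hermitian property of $W(\bx)$), the conjugate symmetry of the inner product, and the reality of $E(k_{\parallel})$ coming from the self-adjointness of $\mathcal{L}^W(k_{\parallel})$ noted after \eqref{equ:def}. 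Only the $(q,q)$-bracket survives, giving the reduced identity and hence \eqref{equ:identity}.

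The whole argument is algebraic, so there is no real obstacle. The one place that needs care is the conjugate cross term $a(q,p) - E(k_{\parallel})(q,p)$, which is not directly killed by \eqref{equ:eigenvar} itself and must instead be obtained by conjugating the $(p,q)$-identity. That is where the Hermitian structure of $a$ and the reality of $E(k_{\parallel})$ are essential; once they are invoked the cancellation is immediate.
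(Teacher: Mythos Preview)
Your proof is correct. The paper states this lemma without proof, so there is nothing to compare against; your expansion-and-cancellation argument is exactly the standard derivation of this classical Rayleigh-quotient identity. Your reading of the left-hand side as $a(q,q)/\|q\|_0^2$ rather than the printed $a(p,q)/\|q\|_0^2$ is also correct---the literal formula in the paper is a typo, as confirmed by the way the identity is applied in \eqref{equ:eigenerror}, where $q=p_h$ and the left side becomes $E_h(k_\parallel)=a(p_h,p_h)$.
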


\subsection{Superconvergent post-processing}
To identify edge modes, we need to compute a series of eigenvalue problems with higher accuracy for $k_{\parallel}\in [0, 2\pi]$.
To achieve higher accuracy, we can use higher-order elements. But it will involve higher computational  complexity.
To avoid the computational complexity, we use the linear element and then adopt  a recovery procedure to
increase the eigenpair approximation accuracy \cite{NagaZhangZhou2006}.

Let $G_h: V_h \rightarrow V_h\times V_h$ denote  the polynomial preserving recovery operator introduced in \cite{ZhangNaga2005, NagaZhang2005}.
For any function $q_h \in V_h$, $G_hq_h$ is a function in $V_h\times V_h$. To define $G_hq_h$, it suffices to define the value of $G_hq_h$ at every nodal point.
Let $\mathcal{N}_h$ denote the set of all nodal points of $\mathcal{T}_h$.  Note that $\mathcal{N}_h$ is  the set of all vertices of $\mathcal{T}_h$.
For any $z\in \mathcal{N}_h$, construct a local patch of the element $\mathcal{K}_{z}$ which contains at least six nodal points.
The key idea of PPR is to fit a quadratic polynomial $p_z\in \mathcal{P}_z(\mathcal{K}_{z})$ in the following  least-squares sense
\begin{equation}
p_z = \arg\min_{p\in\mathbb{P}_{2}(\mathcal{K}_z)}\sum_{\tilde{z}\in\mathcal{N}_h\cap\mathcal{K}_z}(q_h-p)^2(\tilde{z})
\end{equation}
Then the recovered gradient at $z$ is defined as
\begin{equation}
(G_hq_h)(z) = \nabla p_z(z).
\end{equation}
The global recovered gradient is $G_hq_h = (G_hq_h)(z)\phi_z(\mathbf{x})$ where $\{\phi_{z}\}$ is set of nodal basis of $V_h$.

To improve the accuracy of eigenvalue approximation,  we set $q= p_h$  in \eqref{equ:identity} which implies
\begin{equation}\label{equ:eigenerror}
E_h(k_{\parallel})
-  E(k_{\parallel}) = a(p-p_h, p-p_h)
- E(k_{\parallel})\|p-p_h\|^2_{0}
\end{equation}
It is obvious that the first term  dominates in the eigenvalue approximation error.
The idea of \cite{NagaZhangZhou2006} for Laplace eigenvalue problem is
to subtract a good approximation of the first term from both sides by replacing
the exact gradient by recovered gradient.   In our case, it is much more
complicated since  the energy error contains both $\nabla p$ and $p$. Our
idea is to only consider the leading part in the energy error.  Thus,  we
define the recovered eigenvalue as follows
\begin{equation}\label{equ:recovereigen}
\widehat{E_h}(k_{\parallel}) = E_h(k_{\parallel}) - \|W^{1/2}(\nabla p_h - G_hp_h)\|^2_{0}.
\end{equation}

To show the superconvergence of the recovered eigenvalue,  the following supercloseness result  is needed which can be
found in \cite{LinXu1985}.

\begin{lemma}\label{lem:interp}
Let $I_hp$ be the interpolation of $p$ into the finite element space $V_h$. If $p\in H^3_{per,0}$, then we have
\begin{equation}
a(p-I_hp, q_h) \le Ch^2\|p\|_{3}\|q_h\|_{1}, \quad \forall q_h \in V_{h,0}.
\end{equation}
\end{lemma}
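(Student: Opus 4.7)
The plan is to distribute the magnetic gradient $\nabla+\bs\gamma$ with $\bs\gamma:=\I\tfrac{k_{\parallel}}{2\pi}\bk_1$ (noting $\overline{\bs\gamma}=-\bs\gamma$) and decompose
\begin{equation*}
a(p - I_h p, q_h) = T_1 + T_2 + T_3 + T_4,
\end{equation*}
where $T_1=\int W\nabla(p-I_hp)\cdot\overline{\nabla q_h}\,d\bx$ is the principal part, $T_2=-\int W\nabla(p-I_hp)\cdot\bs\gamma\,\overline{q_h}\,d\bx$ and $T_3=\int W\bs\gamma(p-I_hp)\cdot\overline{\nabla q_h}\,d\bx$ are the first-order cross terms, and $T_4=-\int(W\bs\gamma\cdot\bs\gamma)(p-I_hp)\,\overline{q_h}\,d\bx$ is the zeroth-order remainder. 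Each piece must be bounded by $Ch^2\|p\|_3\|q_h\|_1$.

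For $T_3$ and $T_4$, boundedness of $W$ and $\bs\gamma$ together with the standard interpolation estimate $\|p-I_hp\|_0\le Ch^2\|p\|_2$ immediately yields the bound by Cauchy--Schwarz. For the leading term $T_1$, I would invoke the Lin--Xu supercloseness identity for linear Lagrange elements on the uniform triangulation $\mathcal{T}_h$ defined above, handling the variable Hermitian matrix $W(\bx)$ by freezing it to its centroid value $W(z_T)$ on each triangle $T$: the frozen-coefficient contribution reduces entry-wise to the constant-coefficient Lin--Xu estimate, while $\|W - W(z_T)\|_{L^\infty(T)}\le Ch$ combined with $\|\nabla(p-I_hp)\|_0\le Ch\|p\|_2$ controls the residual by $Ch^2\|p\|_2\|q_h\|_1$.

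The real obstacle is $T_2$, since a naive Cauchy--Schwarz against $\|\nabla(p-I_hp)\|_0$ yields only $O(h)$. The plan here is elementwise integration by parts: setting $\mathbf{g}:=W^{\top}\bs\gamma\,\overline{q_h}$,
\begin{equation*}
T_2=\sum_{T\in\mathcal{T}_h}\Bigl[\int_T(p-I_hp)\,\nabla\!\cdot\!\mathbf{g}\,d\bx\;-\;\int_{\partial T}(p-I_hp)\,\mathbf{g}\cdot\bn\,ds\Bigr].
\end{equation*}
The volume sum is $O(h^2)\|p\|_2\|q_h\|_1$ by $\|p-I_hp\|_0\le Ch^2\|p\|_2$ and $\|\nabla\!\cdot\!\mathbf{g}\|_0\le C\|q_h\|_1$. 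The interior-edge contributions cancel in pairs because $p-I_hp$, $W$ and $q_h$ are continuous across each shared edge while the adjacent outward normals are opposite; the remaining boundary edges lie either on the Dirichlet sides $\tau_2=\pm L$ where $q_h$ vanishes or on the periodic sides $\tau_1\in\{0,1\}$, where periodicity of $W$, $p$, $I_hp$ and $q_h$ forces cancellation. The truly delicate ingredient is the Lin--Xu identity on the non-rectangular uniform triangulation dictated by the honeycomb lattice: the congruence of opposite-edge triangle pairs is what makes the freezing reduction to the constant-coefficient case work, and it would fail on a generic shape-regular mesh.
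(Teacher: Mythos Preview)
Your proposal is correct and follows the Lin--Xu route that the paper itself invokes; the paper's own proof is a single sentence deferring to \cite{LinXu1985}, so your decomposition $a(p-I_hp,q_h)=T_1+T_2+T_3+T_4$ and the separate treatment of each piece supply exactly the details the paper leaves implicit. In particular, handling the principal part $T_1$ by coefficient freezing on the uniform parallelogram-paired mesh and the cross term $T_2$ by elementwise integration by parts with interior-edge cancellation and boundary vanishing is precisely what ``using the similar idea in \cite{LinXu1985}'' must mean once the magnetic shift $\bs\gamma$ is present.
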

\begin{proof}
 Using the similar idea in \cite{LinXu1985}, we can prove the above lemma.
\end{proof}

Based on the above lemma, we can show the superconvergence of recovered gradient of eigenfunctions as follows:

\begin{theorem}\label{thm:efunsup}
 Let $G_h$ be the polynomial preserving recovery operator defined in the above. Then for any eigenfunction
 $p_{i,h}$ corresponding to the eigenvalue $E_{i,h}(k_{\mathbf{\parallel}})$, there exists
 an eigenfunction $p_{i}$ corresponding to $E_{i}(k_{\mathbf{\parallel}})$ such that
 \begin{equation}
\|W^{1/2}(\nabla p_i - G_hp_{i,h})\|_{0} \le Ch^2\|p_i\|_{3}.
\end{equation}
\end{theorem}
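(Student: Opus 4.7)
The plan is to route the error through the nodal interpolant $I_hp_i$ and combine the polynomial-preserving consistency of $G_h$ with an eigenfunction supercloseness estimate. By the triangle inequality,
\begin{equation*}
\|W^{1/2}(\nabla p_i - G_hp_{i,h})\|_{0} \le \|W^{1/2}(\nabla p_i - G_hI_hp_i)\|_{0} + \|W^{1/2}(G_hI_hp_i - G_hp_{i,h})\|_{0}.
\end{equation*}
The uniform boundedness of $W$ reduces both terms to unweighted $L^2$ bounds.

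For the first term I would invoke the standard $O(h^2)$ consistency of polynomial preserving recovery: because $G_h$ reproduces gradients of quadratic polynomials on each patch and $p_i \in H^3_{per,0}$, a patchwise Bramble--Hilbert argument from \cite{ZhangNaga2005,NagaZhang2005} gives $\|\nabla p_i - G_hI_hp_i\|_{0} \le Ch^2\|p_i\|_{3}$. The uniform triangulation built from sub-rhombuses has the regular vertex pattern required for PPR to be well defined, so no extra geometric hypothesis is needed.

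For the second term, the boundedness $\|G_hv_h\|_{0} \le C\|v_h\|_{1}$ of the recovery operator on $V_h$ reduces matters to the supercloseness estimate
\begin{equation*}
\|I_hp_i - p_{i,h}\|_{1} \le Ch^2\|p_i\|_{3},
\end{equation*}
where $p_i$ is chosen (inside the eigenspace of $E_i(\kpar)$ if this eigenvalue is not simple) so that Theorem~\ref{thm:approxerror} applies to the pair $(p_i,p_{i,h})$. Setting $\theta_h = p_{i,h} - I_hp_i \in V_{h,0}$ and using both eigenvalue identities $a(p_{i,h},\theta_h)=E_{i,h}(p_{i,h},\theta_h)$ and $a(p_i,\theta_h)=E_i(p_i,\theta_h)$, I would rewrite
\begin{equation*}
a(\theta_h,\theta_h) = a(p_i - I_hp_i,\theta_h) + (E_{i,h}-E_i)(p_{i,h},\theta_h) + E_i(p_{i,h}-p_i,\theta_h).
\end{equation*}
Lemma~\ref{lem:interp} handles the first term by $Ch^2\|p_i\|_{3}\|\theta_h\|_{1}$, and Theorem~\ref{thm:approxerror} bounds the remaining two by $Ch^2\|\theta_h\|_{0}$. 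Coercivity of $a(\cdot,\cdot)$ on $H^1_{per,0}$, which follows from the uniform ellipticity of $W$ combined with the Poincar\'e inequality permitted by the homogeneous condition at $\tau_2=\pm L$, then yields $\|\theta_h\|_{1} \le Ch^2\|p_i\|_{3}$.

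The main obstacle I anticipate is establishing this supercloseness estimate cleanly: one must verify that the complex shifted gradient $(\nabla + \I\frac{\kpar}{2\pi}\bk_1)$ does not spoil coercivity of $a$ uniformly in $\kpar \in [0,2\pi]$, which I would do by integration by parts against $\overline{\theta_h}$, exploiting that $\theta_h$ is periodic along $\bv_1$ and vanishes on the top and bottom edges of $\Omega_{\Sigma,L}$. A secondary technical point is the consistent choice of $p_i$ when $E_i(\kpar)$ is degenerate: one selects $p_i$ as the $L^2$-projection of $p_{i,h}$ onto the exact eigenspace and invokes the gap between clusters of eigenvalues, so that Theorem~\ref{thm:approxerror} still provides the $O(h)$ and $O(h^2)$ bounds needed above.
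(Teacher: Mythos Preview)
Your proposal is correct and follows essentially the same route as the paper: split through $I_hp_i$, use PPR consistency for $\|\nabla p_i - G_hI_hp_i\|_0$, use the $H^1$-boundedness of $G_h$ together with the supercloseness $\|p_{i,h}-I_hp_i\|_1\le Ch^2\|p_i\|_3$, and derive the latter by testing the difference of the two eigenvalue identities against $\theta_h=p_{i,h}-I_hp_i$, invoking Lemma~\ref{lem:interp} and Theorem~\ref{thm:approxerror}, and using ellipticity of $a(\cdot,\cdot)$. Your regrouping $(E_{i,h}-E_i)(p_{i,h},\theta_h)+E_i(p_{i,h}-p_i,\theta_h)$ differs only cosmetically from the paper's $(E_{i,h}-E_i)(p_i,\theta_h)+E_{i,h}(p_{i,h}-p_i,\theta_h)$, and your explicit remarks on coercivity and on the choice of $p_i$ in the degenerate case are refinements the paper leaves implicit.
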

\begin{proof}
By \eqref{equ:eigenvar} and \eqref{equ:eigenfem}, we have
\begin{equation}
\begin{split}
& a(p_{i,h} - p_i, q_h) \\
= & E_{i, h}(k_{\parallel})(p_{i,h}, q_{h}) - E_i(k_{\parallel})(p_i, q_h)\\
= &E_{i,h}(k_{\parallel})(p_{i,h}-p_i, q_h) + (E_{i,h}(k_{\parallel}) - E_i(k_{\parallel}))(p_i, q_h).
\end{split}
\end{equation}
It implies that
\begin{equation}
\begin{split}
& a(p_{i,h} - I_hp_i, q_h) \\
= &  a(p_{i} - I_hp_i, q_h) + E_{i, h}(k_{\parallel})(p_{i,h}, q_{h}) - E_i(k_{\parallel})(p_i, q_h)\\
= &  a(p_{i} - I_hp_i, q_h) + E_{i,h}(k_{\parallel})(p_{i,h}-p_i, q_h) + (E_{i,h}(k_{\parallel}) - E_i(k_{\parallel}))(p_i, q_h)\\
\le &Ch^2 \|p_i\|_{3}\|q_h\|_{1},
\end{split}
\end{equation}
where we have used the Theorem~\ref{thm:approxerror} and Lemma~\ref{lem:interp}.  Taking $q_h = p_{i,h} - I_hp_i$ implies that
\begin{equation}\label{equ:superclose}
\|p_{i,h} - I_hp_i\|_{1} \le Ch^2  \|p_i\|_{3}.
\end{equation}
Thus, we have
\begin{equation}
\begin{split}
&\|W^{1/2}(\nabla p_i - G_hp_{i,h})\|_{0}\\
 \le &\|W^{1/2}(\nabla p_i - G_hI_hp_{i})\|_{0} + \|W^{1/2}(G_hI_h p_i - G_hp_{i,h})\|_{0}\\
  \le &\|(\nabla p_i - G_hI_hp_{i})\|_{0} + \|(G_hI_h p_i - G_hp_{i,h})\|_{0}\\
  \le &\|(\nabla p_i - G_hI_hp_{i})\|_{0} + \|\nabla (I_h p_i - p_{i,h})\|_{0}\\
  \le &Ch^2  \|p_i\|_{3},
\end{split}
\end{equation}
where we have use Lemma 4.3 in \cite{GuoYang2017a} and \eqref{equ:superclose}.
\end{proof}

Using the above theorem, we can prove the following superconvergence result for recovered eigenvalues.

\begin{theorem}\label{thm:evsuper}
  Let $\widehat{E}_{i,h}(k_{\parallel})$ be the approximate eigenvalue of $E_i(k_{\parallel})$ given in \eqref{equ:recovereigen}.  Then we
 have
 \begin{equation}\label{equ:evsuper}
|\widehat{E}_{i,h}(k_{\parallel}) - E_i(k_{\parallel})| \le Ch^3\|p_i\|_{3}^2.
\end{equation}
\end{theorem}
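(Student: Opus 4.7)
The plan is to start from the definition~\eqref{equ:recovereigen} and split the error as
\begin{equation*}
\widehat{E}_{i,h}(\kpar) - E_i(\kpar)
= \bigl[E_{i,h}(\kpar) - E_i(\kpar)\bigr] - \bigl\|W^{1/2}(\nabla p_{i,h} - G_hp_{i,h})\bigr\|_0^2.
\end{equation*}
Equation~\eqref{equ:eigenerror} (a direct consequence of Lemma~\ref{lem:identity} with $q=p_{i,h}$) converts the bracketed eigenvalue error into an energy/$L^2$ combination on the eigenfunction error, yielding
\begin{equation*}
\widehat{E}_{i,h}(\kpar) - E_i(\kpar)
= a(e_i,e_i) - E_i(\kpar)\|e_i\|_0^2 - \bigl\|W^{1/2}(\nabla p_{i,h} - G_hp_{i,h})\bigr\|_0^2,
\qquad e_i := p_i - p_{i,h}.
\end{equation*}
By Theorem~\ref{thm:approxerror}, $E_i\|e_i\|_0^2 = O(h^4)$, which already meets the target bound; the real work is in handling $a(e_i,e_i)$ versus the recovery term.

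Next I would reconcile the sesquilinear form with the recovery, which is built from $\nabla p_{i,h}$ rather than the twisted gradient $\tilde\nabla := \nabla + \I\tfrac{\kpar}{2\pi}\bk_1$ appearing in $a(\cdot,\cdot)$. Expanding gives
\begin{equation*}
a(e_i,e_i) = \|W^{1/2}\nabla e_i\|_0^2 + 2\,\mathrm{Re}\!\int_{\Omega_{\Sigma,L}} W\,\nabla e_i\cdot\overline{\bigl(\I\tfrac{\kpar}{2\pi}\bk_1\,e_i\bigr)}\,d\bx + \Bigl\|\tfrac{\kpar}{2\pi}W^{1/2}\bk_1\,e_i\Bigr\|_0^2.
\end{equation*}
Theorem~\ref{thm:approxerror} controls $\|\nabla e_i\|_0=O(h)$ and $\|e_i\|_0=O(h^2)$, so Cauchy--Schwarz makes the cross term $O(h^3)$ and the last term $O(h^4)$. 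Hence $a(e_i,e_i)=\|W^{1/2}\nabla e_i\|_0^2+O(h^3)$.

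The key step is then proving
\begin{equation*}
\bigl\|W^{1/2}\nabla e_i\bigr\|_0^2 - \bigl\|W^{1/2}(\nabla p_{i,h} - G_hp_{i,h})\bigr\|_0^2 = O(h^3).
\end{equation*}
Setting $\alpha := W^{1/2}\nabla e_i$ and $\beta := W^{1/2}(\nabla p_i - G_hp_{i,h})$, one has $\alpha-\beta = W^{1/2}(G_hp_{i,h} - \nabla p_{i,h})$, so the polarization identity gives
\begin{equation*}
\|\alpha\|_0^2 - \|\alpha-\beta\|_0^2 = 2\,\mathrm{Re}\,\langle\alpha,\beta\rangle - \|\beta\|_0^2.
\end{equation*}
Theorem~\ref{thm:efunsup} yields $\|\beta\|_0 \le Ch^2\|p_i\|_3$, and Theorem~\ref{thm:approxerror} gives $\|\alpha\|_0 = O(h\|p_i\|_2)$; a single Cauchy--Schwarz therefore produces $2|\langle\alpha,\beta\rangle|=O(h^3\|p_i\|_3^2)$ while $\|\beta\|_0^2=O(h^4\|p_i\|_3^2)$.

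Collecting the three contributions $O(h^4)$ (from $E_i\|e_i\|_0^2$), $O(h^3)$ (from the twisted-gradient expansion), and $O(h^3)$ (from the polarization identity) produces the asserted bound $|\widehat{E}_{i,h}(\kpar) - E_i(\kpar)| \le Ch^3\|p_i\|_3^2$. The main obstacle is conceptual rather than technical: the recovery operator targets $\nabla p$ whereas the natural energy object is $\tilde\nabla p$, so one must verify that the $\kpar$-dependent ``magnetic'' correction in the bilinear form is already $O(h^3)$ before the polarization identity can convert the $O(h^2)$ superconvergence of Theorem~\ref{thm:efunsup} into an $O(h^3)$ eigenvalue bound via an $\|\alpha\|_0\|\beta\|_0 = O(h)\cdot O(h^2)$ pairing.
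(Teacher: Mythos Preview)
Your proposal is correct and follows essentially the same route as the paper: start from \eqref{equ:eigenerror}, expand the energy $a(e_i,e_i)$ into its pure-gradient, cross, and zero-order pieces, estimate the $\kpar$-dependent cross and zero-order contributions directly via Theorem~\ref{thm:approxerror}, and then cancel the leading $\|W^{1/2}\nabla e_i\|_0^2$ against the recovery correction using a polarization identity together with Theorem~\ref{thm:efunsup}. The only cosmetic difference is in the polarization step: the paper writes $\|W^{1/2}\nabla e_i\|_0^2-\|W^{1/2}(\nabla p_{i,h}-G_hp_{i,h})\|_0^2=\|\beta\|_0^2+2\,\mathrm{Re}\langle\beta,\alpha-\beta\rangle$ and bounds the cross term by $\|\beta\|_0\|\alpha-\beta\|_0=O(h^2)\cdot O(h)$, whereas you use $2\,\mathrm{Re}\langle\alpha,\beta\rangle-\|\beta\|_0^2$ and bound by $\|\alpha\|_0\|\beta\|_0=O(h)\cdot O(h^2)$; both pairings yield the same $O(h^3)$.
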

\begin{proof}
 By the Lemma \ref{lem:identity} and \eqref{equ:recovereigen}, we have
  \begin{equation*}
\begin{split}
& \widehat{E}_{i,h}(k_{\parallel})
-  E_i(k_{\parallel}) \\
=& a(p_i-p_{i,h}, p_i-p_{i,h}) - \|W^{1/2}(\nabla p_{i,h} - G_hp_{i,h})\|_{0}^2-E_i(k_{\parallel}) \|p_i-p_{i,h}\|^2_{0} \\
=& (W(\nabla + \frac{\I k_{\parallel}}{2\pi}\mathbf{k}_1)(p_i-p_{i,h}), (\nabla + \frac{\I k_{\parallel}}{2\pi}\mathbf{k}_1)(p_i-p_{i,h})) - \\
& \|W^{1/2}(\nabla p_h - G_hp_h)\|_{0}^2-E_i(k_{\parallel}) \|p_i-p_{i,h}\|^2_{0} \\
= &\left (W(\nabla p_i - \nabla p_{i,h}), \nabla p_i - \nabla p_{i,h}\right)  -\frac{\I k_{\parallel}}{2\pi}\left(W\nabla (p_i-p_{i,h}), \mathbf{k}_1(p_i-p_{i,h})\right) \\+
& \frac{\I k_{\parallel}}{2\pi}\left(\mathbf{k}_1(p_i-p_{i,h}), W\nabla (p_i-p_{i,h})\right) + \frac{k^2_{\parallel}}{4\pi^2}\left(W\mathbf{k}_1(p_i-p_{i,h}), \mathbf{k}_1(p_i-p_{i,h})\right)+ \\
 & \|W^{1/2}(\nabla p_h - G_hp_h)\|_{0}^2-E_i(k_{\parallel}) \|p_i-p_{i,h}\|^2_{0} \\
=&\left (W(\nabla p_i - G_hp_{i,h}), \nabla p_i - G_hp_{i,h}\right) -\frac{\I k_{\parallel}}{2\pi}\left(W\nabla (p_i-p_{i,h}), \mathbf{k}_1(p_i-p_{i,h})\right)+ \\
& \frac{\I k_{\parallel}}{2\pi}\left(\mathbf{k}_1(p_i-p_{i,h}), W\nabla (p_i-p_{i,h})\right) + \frac{k^2_{\parallel}}{4\pi^2}\left(W\mathbf{k}_1(p_i-p_{i,h}), \mathbf{k}_1(p_i-p_{i,h})\right)+ \\
& 2\operatorname{Re} \left(W(\nabla p_i - G_hp_{i,h}), G_hp_{i,h}-\nabla p_{i,h}\right)-E_i(k_{\parallel}) \|p_i-p_{i,h}\|^2_{0} \\
\le &C\left(\|\nabla p_i - G_hp_{i,h}\|_{0}^2 + \|p_i-p_{i,h}\|_{0} \|\nabla(p_i-p_{i,h})\|_{0}\right. \\
&\quad +\|p_i-p_{i,h}\|_{0}^2 +  \|\nabla p_i - G_hp_{i,h}\|_{0}\|\nabla p_{i,h} - G_hp_{i,h}\|_{0}+ \left. \|p_i-p_{i,h}\|^2_{0} )\right)\\
\le & Ch^3 \|p_i\|_{3}^2.
\end{split}
\end{equation*}
\end{proof}

\begin{remark}\label{rmk:sharp}
One will see in Section \ref{sec:example} that the real error bound of eigenvalues is $\mathcal{O}(h^4)$ instead of the theoretical estimate $\mathcal{O}(h^3)$, which has been pointed out in the pioneer work \cite{NagaZhang2004}. To the best of our knowledge, the real sharp error estimate of gradient recovery procedure has not been rigorously obtained. An alternative method, referred to the function recovery procedure \cite{NagaZhang2012}, can be applied to achieve a theoretical proof of $O(h^4)$ error bound. Both the gradient recovery method and the function recovery method share the same superconvergence results.  But in contrast to the function recovery method, the gradient recovery procedure is more computationally
efficient and it admits a fast sparse matrix representation as shown in the next subsection. Those properties are desired when we need
to solve a series of eigenvalue problems.

%
\end{remark}

 \subsection{Efficient Implementation}
 In this section, we present an efficient implementation of the proposed method. One of our key observation is that
 the gradient recovery procedure is just two multiplications of  a sparse matrix and a vector, which can be
 done within $\mathcal{O}(N)$ operations. For a sake of clarity, we rewrite $G_h$ as

\begin{equation}
    G_hp=
    \begin{pmatrix}
	G_h^xp\\
	G_h^yp
    \end{pmatrix}.
    \label{rgrad}
\end{equation}
 Notice that gradient recovery operator $G_h$ is a linear bounded operator from $V_{h}$ to $V_{h}\times V_{h}$.  In other words, $G_h^x$
and $G_h^y$ are both  linear bounded operators from $V_{h}$ to $V_{h}$.  It is well known that every linear operator (linear transform) from one finite dimension vector space to itself can be rephrased as a matrix linear transform \cite{Axler2015}.   Suppose $\{\phi_i\}_{i=1}^N$ is the standard nodal basis function for $V_h$.  Let $\bf b$ be
the vector of basis functions, i.e. ${\bf b}= (\phi_1, \cdots, \phi_N)^T$.  Then for every function $v_h \in V_h$, it can be rewritten in the following form
\begin{equation}
v_h = \sum_{1}^{N}v_i\phi_i =  {\bf v}^{T}{\bf b},
\end{equation}
where ${\bf v} = (v_1, \cdots, v_N)^T$ and $v_i$ is the value of $v_h$ at nodal point $z_i$.   Similarly,  the recovered gradient
$G_hv_h$ can also be  rephrased as
\begin{equation}
G_hv_h = [G^x_hv_h, G^y_hv_h]  = [{\bf v_x}^{T}{\bf b}, {\bf v_y}^{T}{\bf b}]
\label{equ:grad}
\end{equation}
where $\bf v_x$ and $\bf v_y$ are the vectors of  recovered gradient at nodal points.  Since $G_h^x$ and $G_h^y$ are two linear bounded  operators from
$V_h$ to $V_h$,  there exist two matrices ${\bf G_h^x} \in \mathbb{R}^{N\times N}$ and ${\bf G_h^y}\in \mathbb{R}^{N\times N}$ such that
\begin{equation}
{\bf v_x} = {\bf G_h^x} {\bf v} \text{         and       }  {\bf v_y} = {\bf G_h^y} {\bf v}.
\label{equ:gradmatrix}
\end{equation}
Here $\bf G_h^x$ and $\bf G_h^y$ are called the first order differential matrices.  From the definition of polynomial preserving recovery, it is obvious  $\bf G_h^x$ and $\bf G_h^y$
are both sparse matrices.

 To efficiently implement the algorithm, we rewrite the bilinear form  $a(\cdot, \cdot)$ as

 \begin{equation}
\begin{split}
 a(p, q) = &\int_{\Omega_{\Sigma, L}}W(\mathbf{x})(\nabla +  \I\frac{k_{\parallel}}{2\pi}\mathbf{k}_1)p(\mathbf{x})
 \cdot \overline{(\nabla +  \I\frac{k_{\parallel}}{2\pi}\mathbf{k}_1)q(\mathbf{x})}d\mathbf{x}\\
 = & \int_{\Omega_{\Sigma, L}}W(\mathbf{x})\nabla p(\mathbf{x})
 \cdot \overline{\nabla q(\mathbf{x})}d\mathbf{x}   -\\
  & \I\frac{k_{\parallel}}{2\pi}\int_{\Omega_{\Sigma, L}}W(\mathbf{x})\nabla p(\mathbf{x})
 \cdot \overline{\mathbf{k}_1q(\mathbf{x})}d\mathbf{x}   +\\
& \I\frac{k_{\parallel}}{2\pi}\int_{\Omega_{\Sigma, L}}W(\mathbf{x})\mathbf{k}_1q(\mathbf{x})
 \cdot \overline{\nabla p(\mathbf{x})}d\mathbf{x}   + \\
 &\frac{k_{\parallel}^2}{4\pi^2}\int_{\Omega_{\Sigma, L}}W(\mathbf{x})\mathbf{k}_1q(\mathbf{x})
 \cdot \overline{\mathbf{k}_1q(\mathbf{x})}d\mathbf{x}.
\end{split}
\end{equation}
Let $\bf A$, $\bf B$, and $\bf C$ be the sparse matrices of  the bilinear form  $ \int_{\Omega_{\Sigma, L}}W(\mathbf{x})\nabla p(\mathbf{x})
 \cdot \overline{\nabla q(\mathbf{x})}d\mathbf{x}$,  $ \int_{\Omega_{\Sigma, L}}W(\mathbf{x})\nabla p(\mathbf{x})
 \cdot  \overline{\mathbf{k}_1q(\mathbf{x})}d\mathbf{x}$, and  $\int_{\Omega_{\Sigma, L}}W(\mathbf{x})\mathbf{k}_1p(\mathbf{x})
 \cdot \overline{\mathbf{k}_1q(\mathbf{x})}d\mathbf{x}$, respectively. Then the total sparse matrix can be represented as
 \begin{equation}
 {\bf S} = {\bf A} -  \I\frac{k_{\parallel}}{2\pi} {\bf B} +  \I\frac{k_{\parallel}}{2\pi}{\bf B}^T + \frac{k_{\parallel}^2}{4\pi^2}{\bf C}.
\end{equation}
In addition, we use $\bf M$ to denote the mass matrix.

The above algorithm can be summarized  in Algorithm \ref{alg:super}.

\begin{algorithm}[H]\label{alg:super}
 Generate a uniform mesh $\mathcal{T}_h$\;
 Construct sparse matrices $\bf A$, $\bf B$, $\bf C$, $\bf M$, $\bf G_x$, and $\bf G_y$\;
 Let $k = linspace(0, 2\pi, K)$\;
  \For{j = 1:K}{
  Let $k_{\parallel} = k(j)$.\;
  Form the big stiffness matrix $ {\bf S} = {\bf A} -  \I\frac{k_{\parallel}}{2\pi} {\bf B} +  \I\frac{k_{\parallel}}{2\pi}{\bf B}^T + \frac{k_{\parallel}^2}{4\pi^2}{\bf C}$\;
  Solve the generalized eigenvalue problem ${\bf S}{\bf v}= E_h(k_{\parallel}){\bf M} {\bf v}$\;
  Compute the recovered gradient by doing two sparse matrix-vector multiplications ${\bf v_x} = {\bf G_h^x} {\bf v} $ and ${\bf v_y} = {\bf G_h^y} {\bf v}$\;
  Update the eigenvalue  $$\widehat{E_h}(k_{\parallel}) = E_h(k_{\parallel}) - \|W^{1/2}(\nabla p_h - G_hp_h)\|_{0, \Omega_{\Sigma, L}}^2.$$
 }
 \caption{Superconvergent post-processing algorithm for computing edge mode}
\end{algorithm}

From Algorithm \ref{alg:super}, the cost of gradient recovery is about $\mathcal{O}(N)$ and the most expansive part is the computation of the generalized eigenvalue.

\section{Numerical Examples}\label{sec:example}
In this section, we present several numerical examples to show the efficiency of the proposed Bloch theory-based gradient recovery method.  Our method and analysis apply for any honeycomb structured media with a domain wall modulation given in Section \ref{sec:pre}.  The material weight is always of the form
\begin{equation}
W(\mathbf{x}) = A(\mathbf{x}) + \delta\eta(\delta\mathbf{k}_2\cdot \mathbf{x})B(\mathbf{x}).
\end{equation}
In the numerical examples, $A(\bx)$ is given in \eqref{honeycombpoten}, $B(\bx)$ is given in \eqref{Pbreaking} or \eqref{Cbreaking} and $\eta(\zeta) = \tanh(\zeta)$. These simple choices of material weights are sufficient enough to demonstrate our method and analysis. The first example is to numerically verify the superconvergence of the method, and the other examples are devoted to the computation of edge modes for the $\mathcal{P}$-symmetry and $\mathcal{C}$-symmetry breaking cases in honeycomb structures.

%

\subsection{Verification of superconvergence}
In this example, we present a comparison of eigenvalues in \eqref{equ:dw_evp}-\eqref{equ:localized} computed by the standard finite element method and gradient recovery method, respectively.
In this test, we take $N= 20$, $40$, $80$, $160$, $320$, $640$ and $L = 10$. $A(\bx)$ is given in \eqref{honeycombpoten} with $a_0=23$, $C=\begin{pmatrix}-\frac{1}{2}&0\\0&-\frac12\end{pmatrix}$. $B(\bx)$ is given in \eqref{Pbreaking}. $\delta=2$. Namely,
\begin{align}\label{eq:sym_1}
&A(\mathbf{x}) = \left[23- \cos(\mathbf{x}\cdot \mathbf{k}_1) -\cos(\mathbf{x}\cdot \mathbf{k}_2)
-\cos(\mathbf{x}\cdot \mathbf{k}_3)\right]I_{_{2\times2}}, \\
&B(\mathbf{x}) = \left[\sin(\mathbf{x}\cdot \mathbf{k}_1) +\sin(\mathbf{x}\cdot \mathbf{k}_2)
+\sin(\mathbf{x}\cdot \mathbf{k}_3)\right]I_{_{2\times2}}.\label{eq:sym_2}
\end{align}

To compute the error of eigenvalues, we consider the following relative errors
\begin{equation*}
Err_i =  \frac{|E_{i, h_j} - E_{i, h_{j+1}}|}{ E_{i, h_{j+1}}},
\end{equation*}
and
\begin{equation*}
\widehat{Err}_i =  \frac{|\widehat{E}_{i, h_j} -\widehat{E}_{i, h_{j+1}}|}{ \widehat{E}_{i, h_{j+1}}}.
\end{equation*}
We also use the following error
\begin{equation*}
De_i =  \|G_h(p_{i, h_{j}}) - G_h(p_{i, h_{j+1}})\|_{0,\Omega}
\end{equation*}
to measure the superconvergence of the recovered gradient of the eigenfunctions.

In this test, we take $\mathbf{k} = 0.28\mathbf{k}_1$ and focus on the computation of the first six eigenvalues.  In Figure \ref{fig:eigen}, we plot the convergence rates for the relative error of eigenvalues computed by the standard finite element method.
It indicates that the convergence rate is $\mathcal{O}(h^2)$, which is consistent with the theoretical result in Theorem \ref{thm:approxerror}.
In Figure \ref{fig:reeigen},  we plot the convergence rates for the relative error of the eigenvalues computed by the Bloch theory-based gradient recovery method.
It converges at the superconvergent rate of $\mathcal{O}(h^4)$.  As explained in Remark \ref{rmk:sharp}, it is better than  the result predicted by Theorem \ref{thm:evsuper}. The comparison shows that the gradient recovery method outperforms the standard finite element method in the several digits magnitude.
In the following examples, we shall only show the eigenvalues computed by the gradient recovery method.
In Figure \ref{fig:efun}, we show the error  curves of eigenfunctions.   The recovered gradient is observed to superconvergent
at rate of $\mathcal{O}(h^2)$, which consist with the theoretical result in Theorem \ref{thm:efunsup}.

\begin{figure}[!h]
  \centering
    \includegraphics[width=0.6\textwidth]{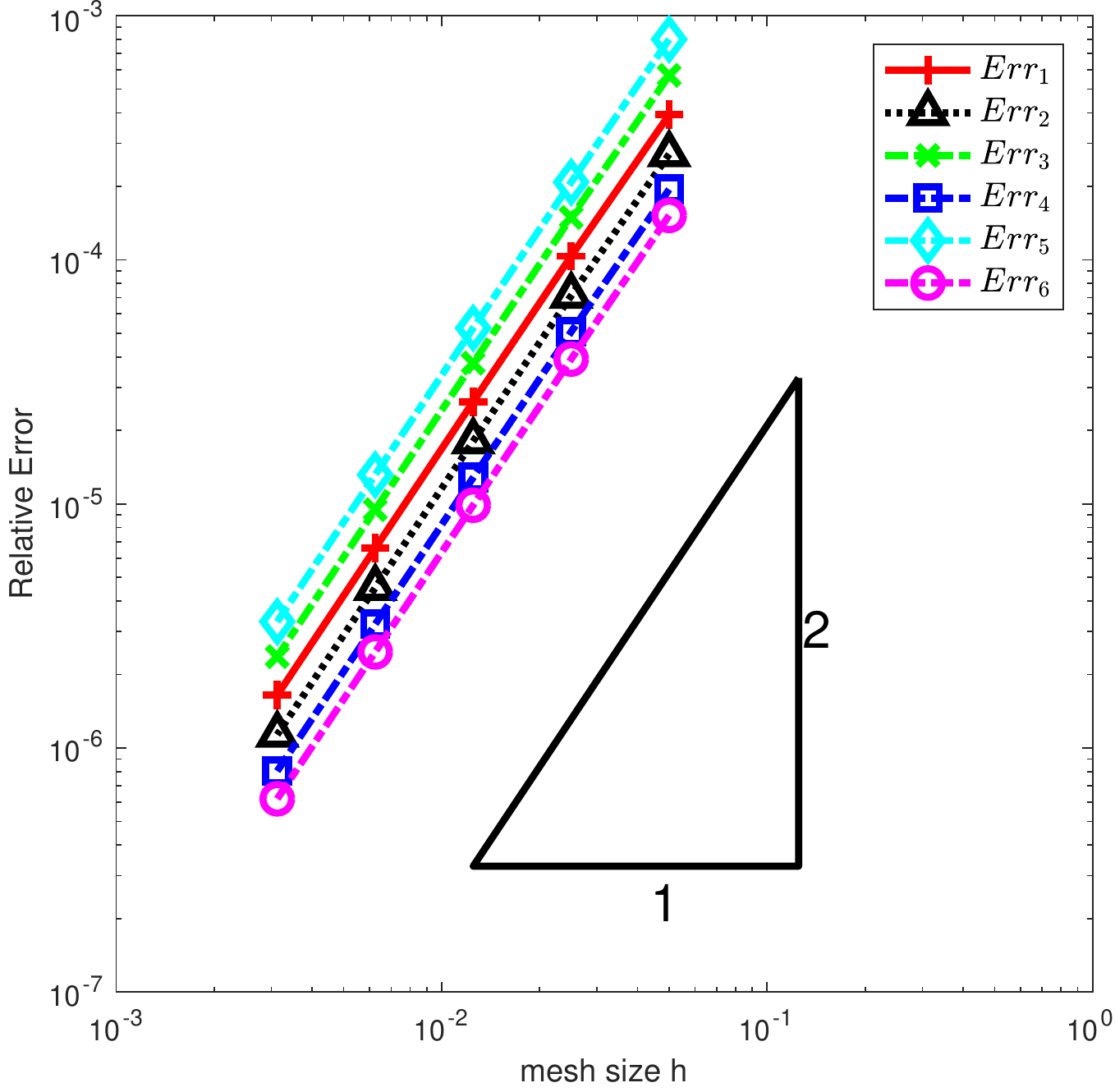}
   \caption{Convergence rates of the eigenvalues for the case \eqref{eq:sym_1}-\eqref{eq:sym_2} computed by the standard finite element method.}
\label{fig:eigen}
\end{figure}

\begin{figure}[!h]
  \centering
    \includegraphics[width=0.6\textwidth]{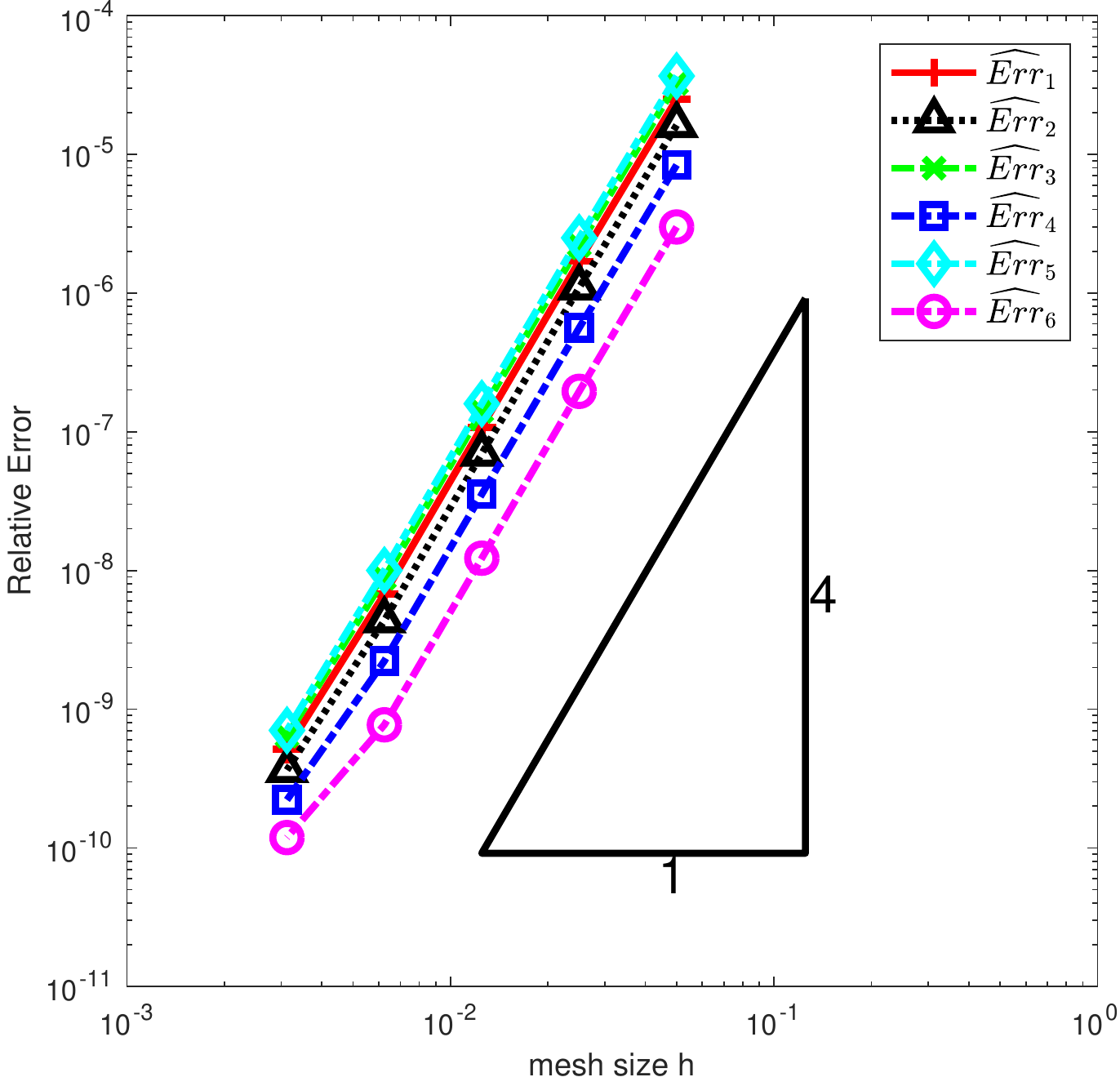}
   \caption{Convergence rates of the eigenvalues for the case \eqref{eq:sym_1}-\eqref{eq:sym_2} computed by the Bloch-theory based gradient recovery method.}
\label{fig:reeigen}
\end{figure}

\begin{figure}[!h]
  \centering
    \includegraphics[width=0.6\textwidth]{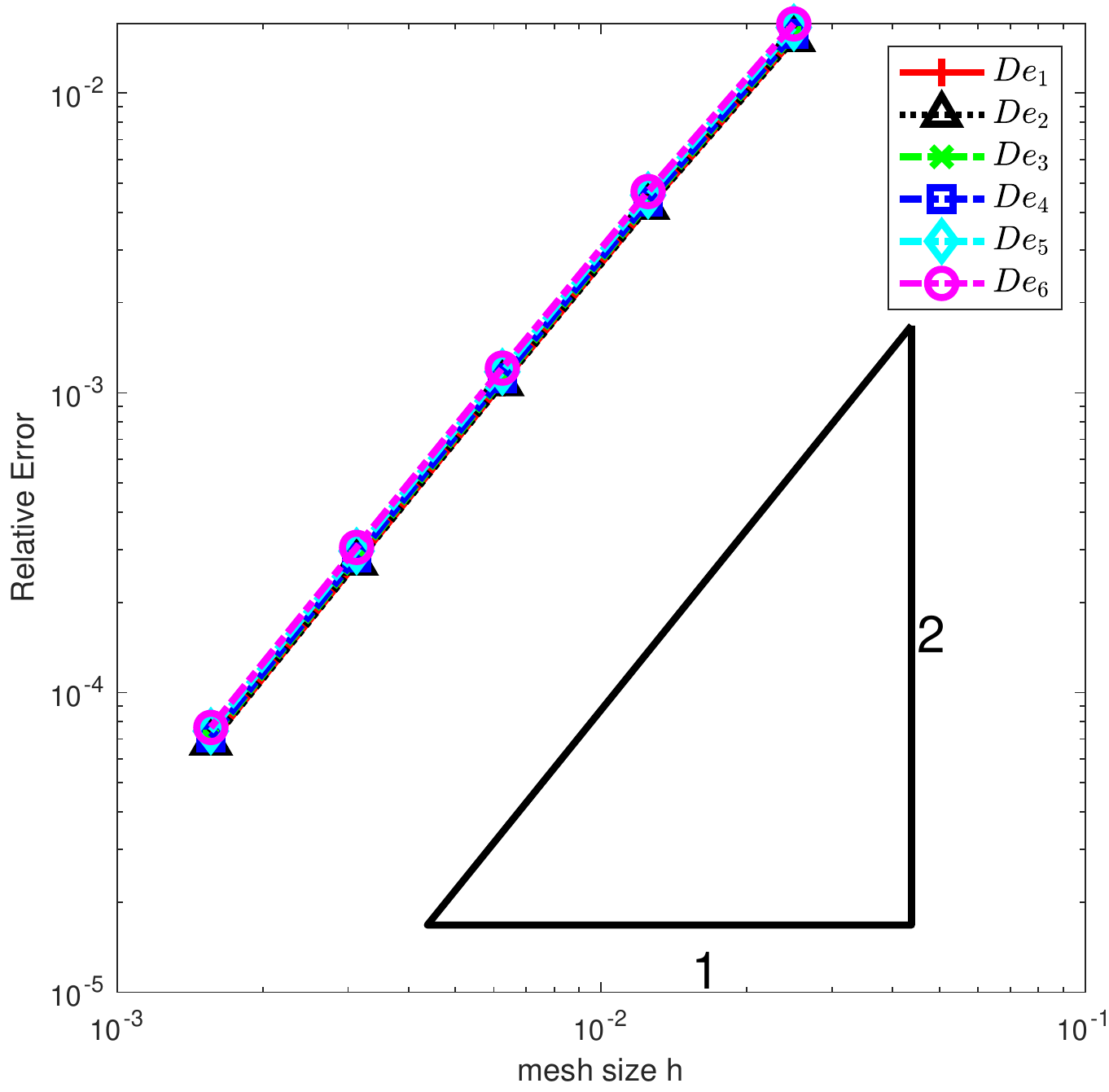}
   \caption{Convergence rates of recovered gradient of the eigenfunctions for the case \eqref{eq:sym_1}-\eqref{eq:sym_2}.}
\label{fig:efun}
\end{figure}

\subsection{Computational of edge modes for $\mathcal{P}$-symmetry breaking}
Here we test the $\mathcal{P}$-symmetry breaking case, \emph{i.e.,} $B(\bx)$ is given in \eqref{Pbreaking}.  In all the following tests, we take the $N = 64$ and  the mesh size
is $h = \frac{1}{64}$.

{\bf Test Case 1:} In this test,  we consider the case that
\begin{align}\label{eq:C1_1}
&A(\mathbf{x}) = \left[23- \cos(\mathbf{x}\cdot \mathbf{k}_1) -\cos(\mathbf{x}\cdot \mathbf{k}_2)
-\cos(\mathbf{x}\cdot \mathbf{k}_3)\right]I_{_{2\times2}}, \\
&B(\mathbf{x}) = \left[\sin(\mathbf{x}\cdot \mathbf{k}_1) +\sin(\mathbf{x}\cdot \mathbf{k}_2)
+\sin(\mathbf{x}\cdot \mathbf{k}_3)\right]I_{_{2\times2}}, \label{eq:C1_2}
\end{align}
with the parameter $\delta = 6$.

We firstly run our test with $L = 10$. We graph the first twenty-five eigenvalues
for $k_{\parallel} \in [0, 2\pi]$ in Figure \ref{fig:emlen20pb},  from which one can see that the red line corresponding to the $20^{\text{th}}$ eigenvalue is isolated from other lines.  Based on the analysis in \cite{LWZ2017Honeycomb}, this curve corresponds to the edge mode, and
all other eigenvalues belong to the continuous spectrum.  In Figure \ref{fig:pbct2}, we show the contour graph of
 the modulus of   the $19^{\text{th}}$, $20^{\text{th}}$, and $21^{\text{st}}$ eigenvalues when $k_{\parallel}=\frac{2\pi}{3}$.
 In this graph and all the other contour graphs  in this paper , we select $\bv_2$ as $x$-axis and $\bv_1$ as $y$-axis.   From Figure~\ref{fig:pbct20}, we clearly obverse
 the $20^{\text{th}}$ eigenfunction (edge mode) is periodic in $\bv_1$ and  localized at the center along $\bv_2$.

 To make a comparison, we repeat our test for $L=15$.  In Figure \ref{fig:emlen30pb},  we show the plot
 of the first thirty-five recovered eigenvalues.  The edge mode corresponds to the $30^{\text{th}}$ eigenvalue.
 From Figure \ref{fig:pbct30}, we see more clearly that the eigenvalue is  localized at the center along $\bv_2$.

\begin{figure}
  \centering
    \includegraphics[width=0.7\textwidth]{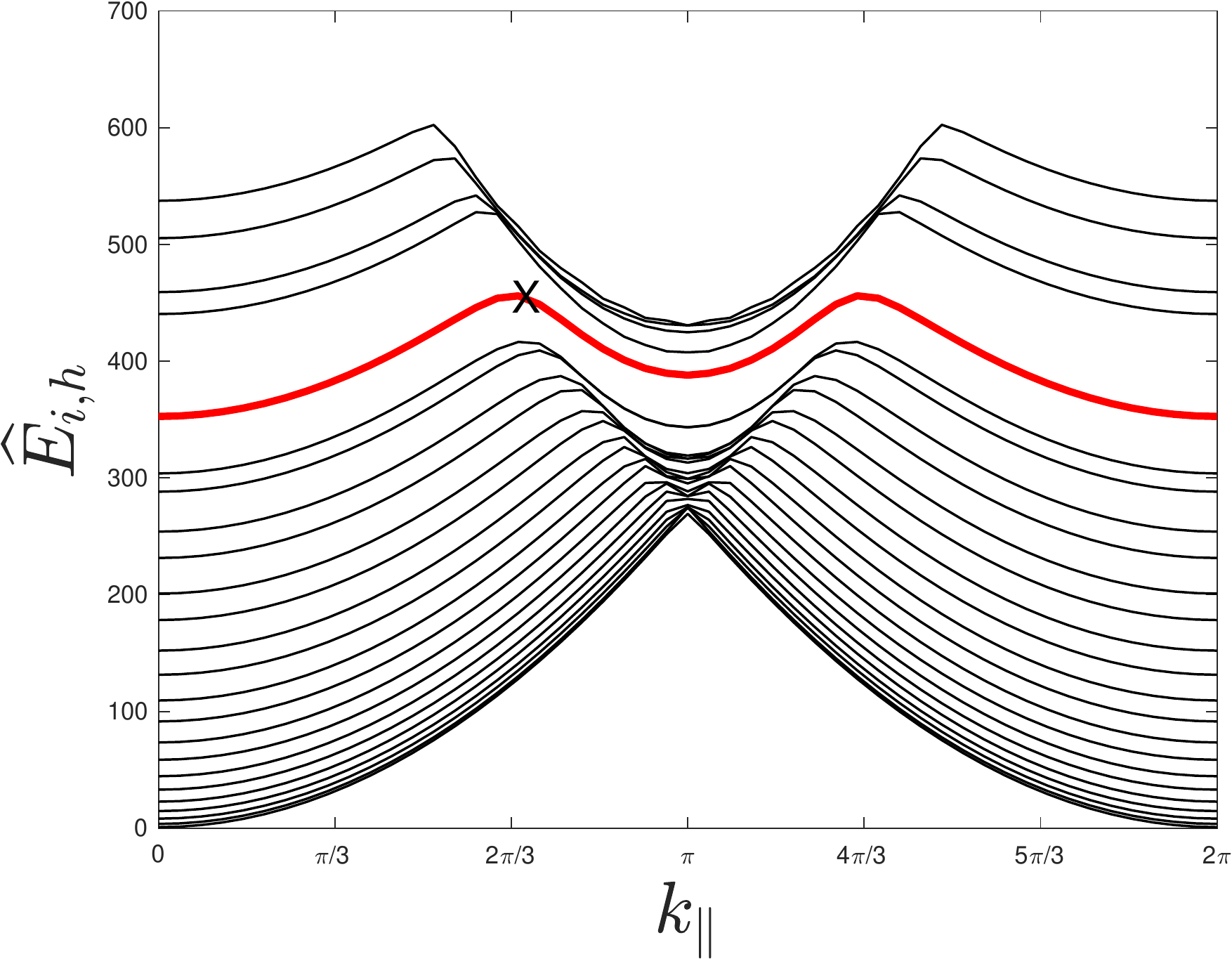}
   \caption{Eigenvalues computed by gradient recovery methods for the $\mathcal{P}$-symmetry breaking case \eqref{eq:C1_1}-\eqref{eq:C1_2} with $L=10$. The edge mode is corresponding to the line marked by `X'. }
\label{fig:emlen20pb}
\end{figure}

\begin{figure}
   \centering
   \subcaptionbox{The $19$th eigenfunction\label{fig:pbct19}}
  {\includegraphics[width=0.7\textwidth]{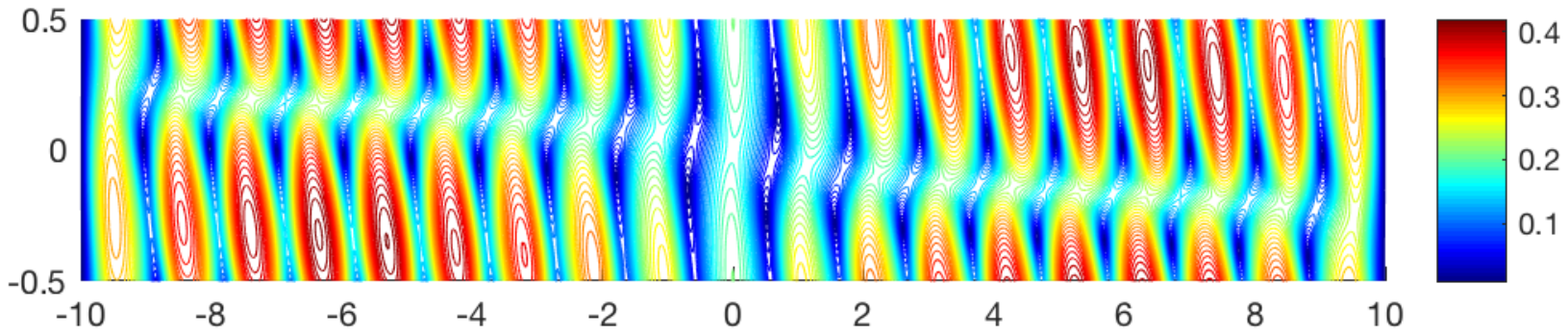}}
  \subcaptionbox{The $20$th eigenfunction\label{fig:pbct20}}
   {\includegraphics[width=0.7\textwidth]{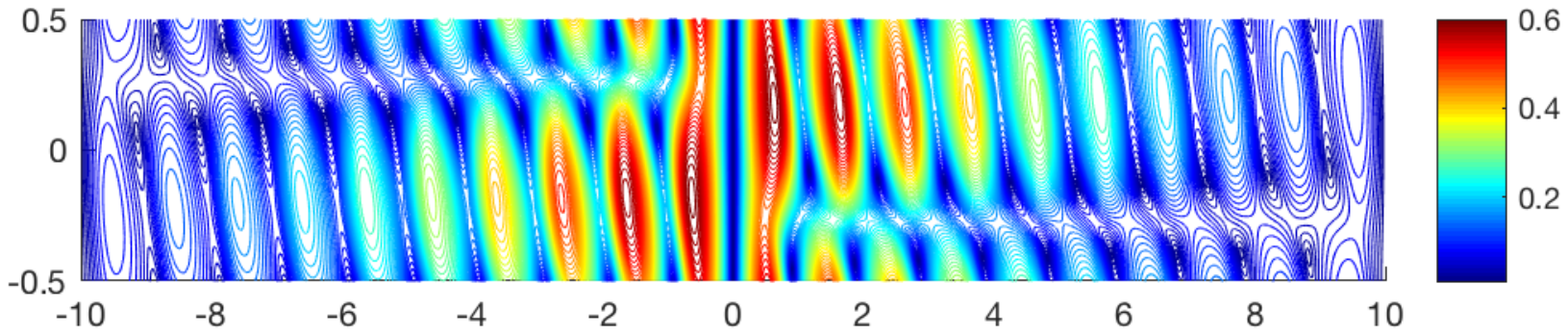}}
  \subcaptionbox{The $21$st eigenfunction\label{fig:pbct21}}
  {\includegraphics[width=0.7\textwidth]{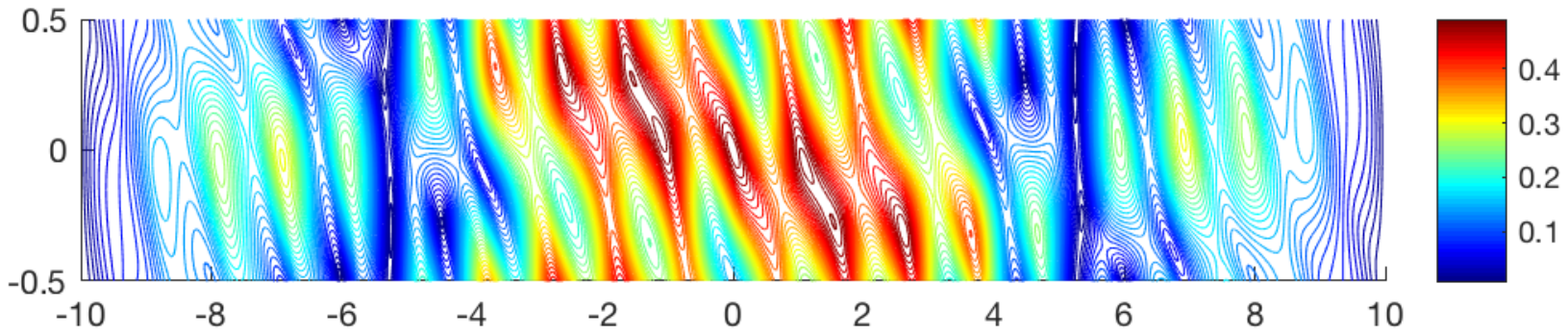}}
   \caption{Contour of the module of the eigenfunctions computed by gradient recovery method with $L=10$ for the $\mathcal{P}$-symmetry breaking case \eqref{eq:C1_1}-\eqref{eq:C1_2} when $k_{\parallel} = \frac{2\pi}{3}$. We choose $\bv_2$ as $x$-axis and $\bv_1$ as $y$-axis.
The $20$th eigenfunction is the edge mode, which is periodic in $\bv_1$ and  localized at the center along $\bv_2$.}\label{fig:pbct2}
\end{figure}

\begin{figure}
  \centering
    \includegraphics[width=0.7\textwidth]{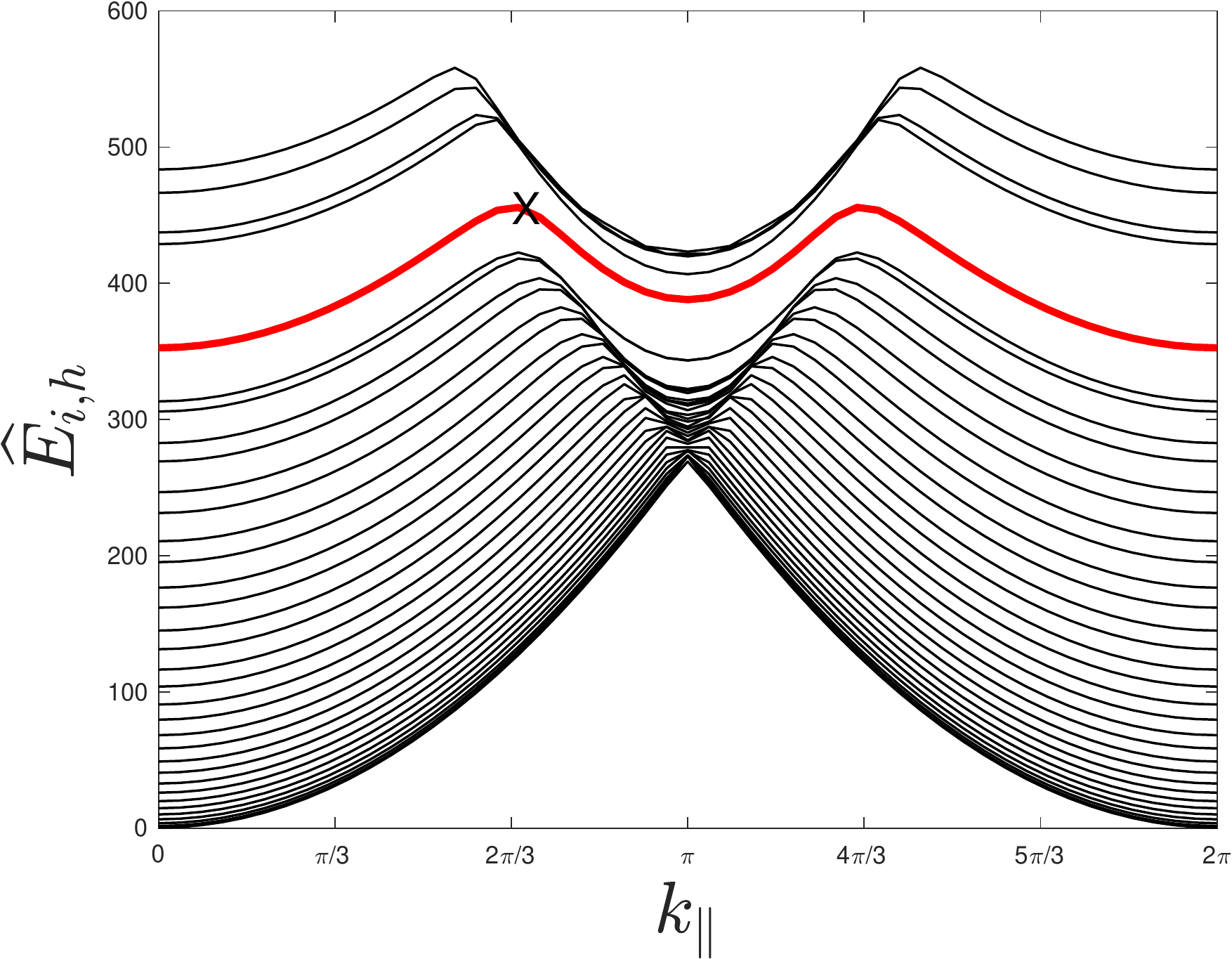}
   \caption{Eigenvalues computed by gradient recovery methods for the $\mathcal{P}$-symmetry breaking case \eqref{eq:C1_1}-\eqref{eq:C1_2} with $L=15$. The edge mode is corresponding to the line marked by `X'.}
\label{fig:emlen30pb}
\end{figure}

\begin{figure}
   \centering
   \subcaptionbox{The $29$th eigenfunction\label{fig:pbct29}}
  {\includegraphics[width=0.7\textwidth]{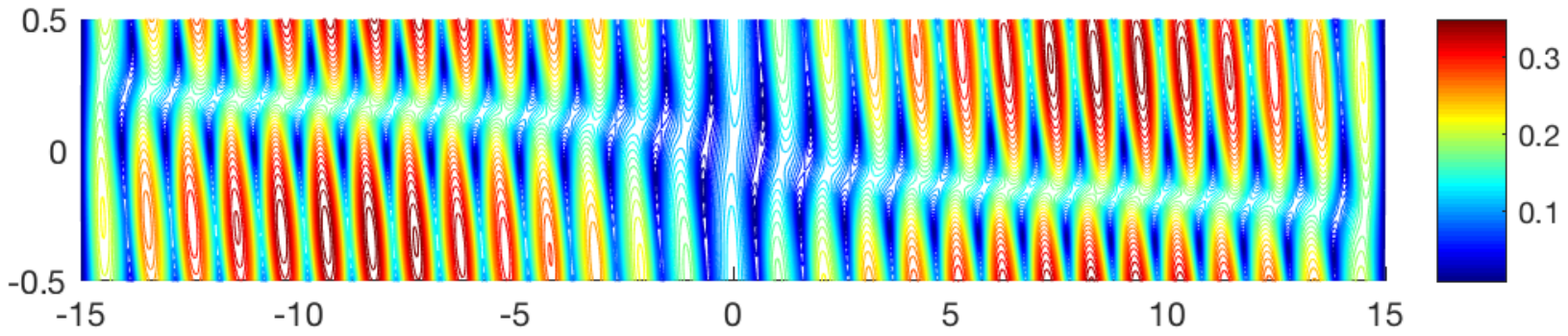}}
  \subcaptionbox{The $30$th eigenfunction\label{fig:pbct30}}
   {\includegraphics[width=0.7\textwidth]{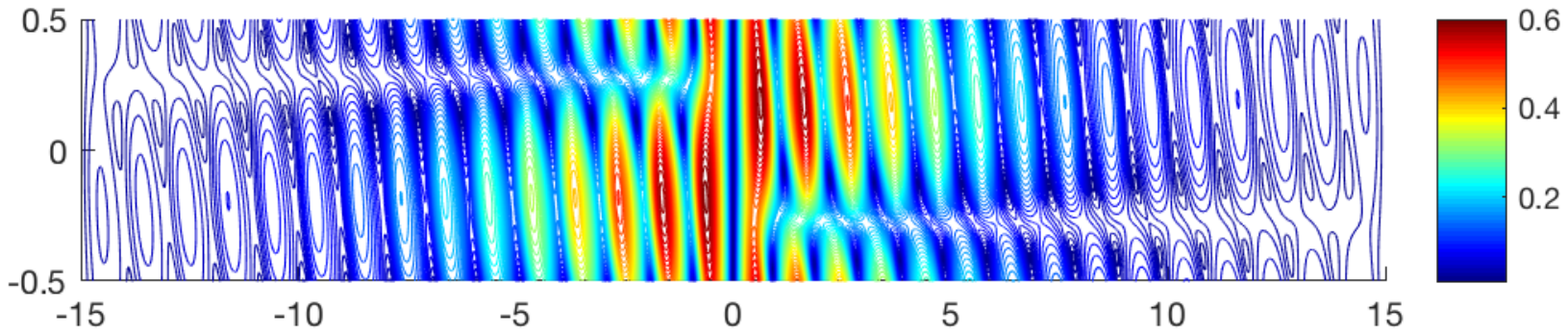}}
  \subcaptionbox{The $31$th eigenfunction\label{fig:pbct31}}
  {\includegraphics[width=0.7\textwidth]{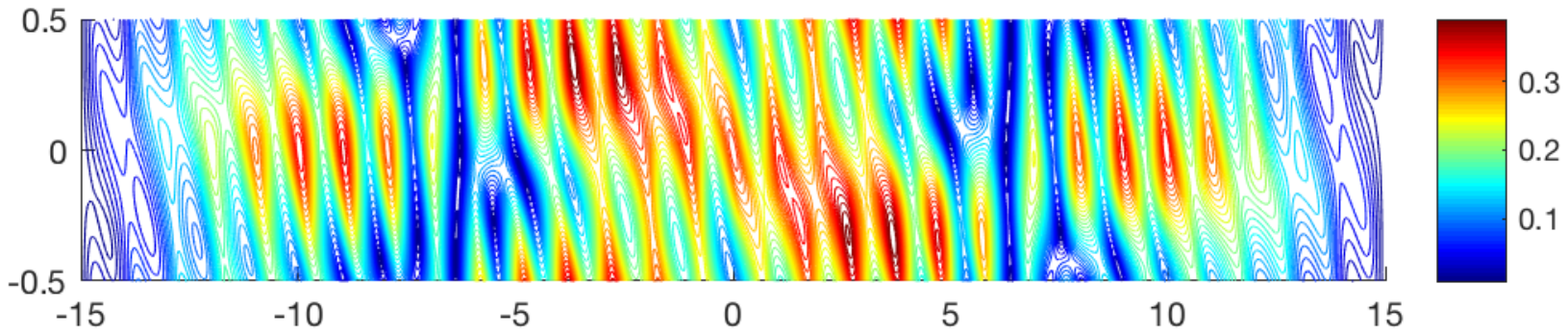}}
   \caption{Contour of the module of the eigenfunctions computed by gradient recovery method with $L=15$ for the $\mathcal{P}$-symmetry breaking case \eqref{eq:C1_1}-\eqref{eq:C1_2} when $k_{\parallel} = \frac{2\pi}{3}$. We choose $\bv_2$ as $x$-axis and $\bv_1$ as $y$-axis.
The $30$th eigenfunction is the edge mode, which is periodic in $\bv_1$ and  localized at the center along $\bv_2$.}\label{fig:pbct3}
\end{figure}

{\bf Test Case 2:}  In this test,  we consider the case that
\begin{align}\label{eq:C2_1}
&A(\mathbf{x}) = \left[4- \cos(\mathbf{x}\cdot \mathbf{k}_1) -\cos(\mathbf{x}\cdot \mathbf{k}_2)
-\cos(\mathbf{x}\cdot \mathbf{k}_3)\right]I_{_{2\times2}}, \\
&B(\mathbf{x}) = \left[\sin(\mathbf{x}\cdot \mathbf{k}_1) +\sin(\mathbf{x}\cdot \mathbf{k}_2)
+\sin(\mathbf{x}\cdot \mathbf{k}_3)\right]I_{_{2\times2}}, \label{eq:C2_2}
\end{align}
with the parameter $\delta = 1$.

We compute the edge mode with $L = 10$.
The first twenty-five eigenvalues are shown in Figure \ref{fig:LE_emlen20pb}.
Similarly, we find that the $20^{\text{th}}$ eigenvalue is isolated from other eigenvalues, which
is marked by `X' and plotted in red.  In Figure \ref{fig:LE_pbct2}, we show the contour of the module of the some
eigenfunctions with $k_{\parallel} = \frac{2\pi}{3}$, which confirms that the $20^{\text{th}}$
eigenvalue is associated with the edge mode.

\begin{figure}
  \centering
    \includegraphics[width=0.7\textwidth]{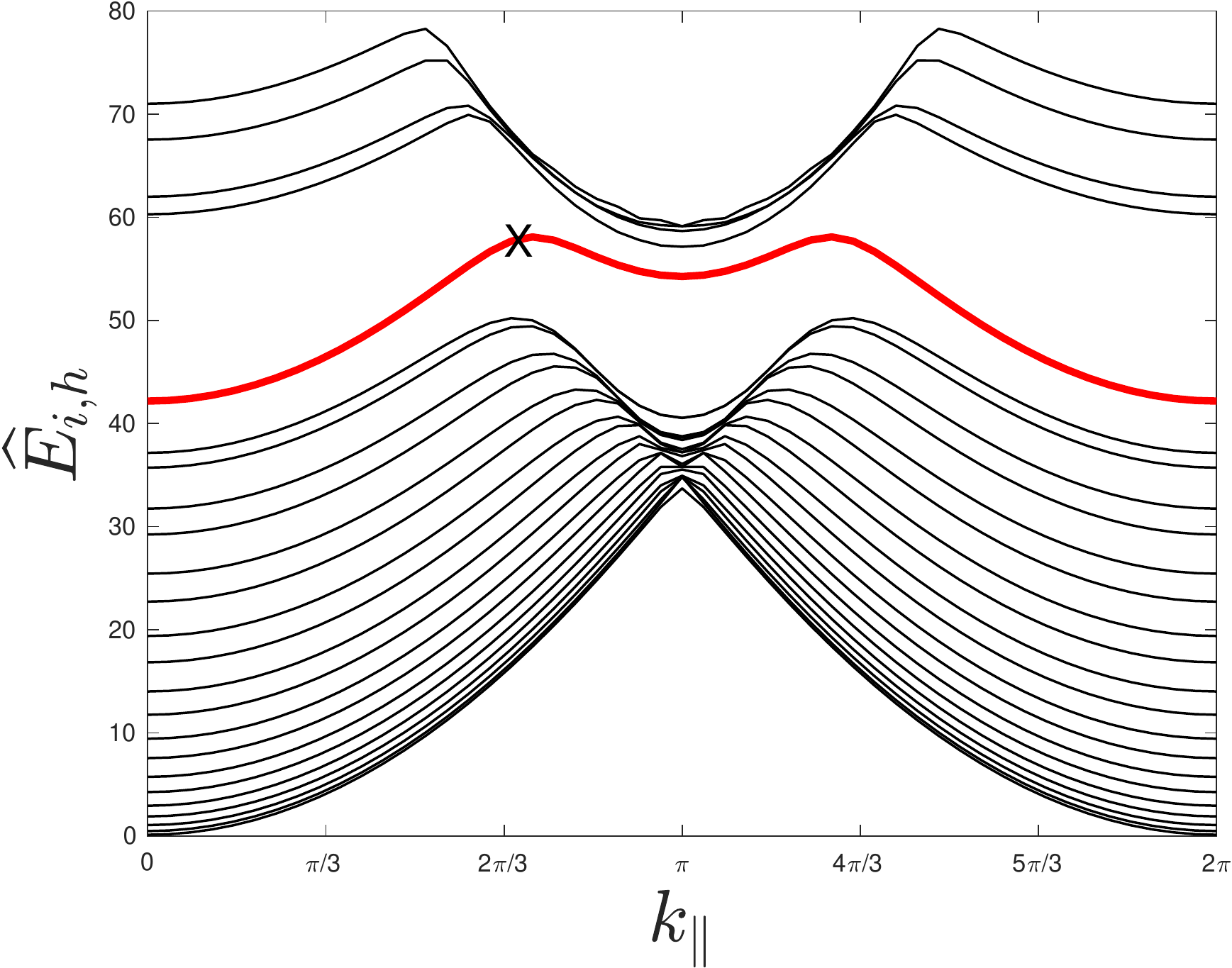}
   \caption{Eigenvalues computed by gradient recovery methods for the $\mathcal{P}$-symmetry breaking case \eqref{eq:C2_1}-\eqref{eq:C2_2} with $L=10$. The edge mode is corresponding to the line marked by `X'.}
\label{fig:LE_emlen20pb}
\end{figure}

\begin{figure}
   \centering
   \subcaptionbox{The $19$th eigenfunction\label{fig:LE_pbct19}}
  {\includegraphics[width=0.7\textwidth]{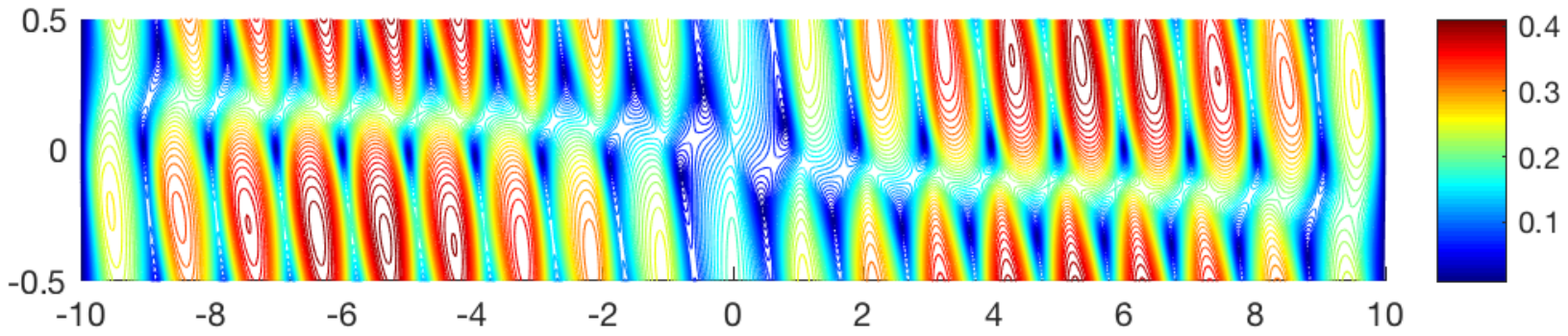}}
  \subcaptionbox{The $20$th eigenfunction\label{fig:LE_pbct20}}
   {\includegraphics[width=0.7\textwidth]{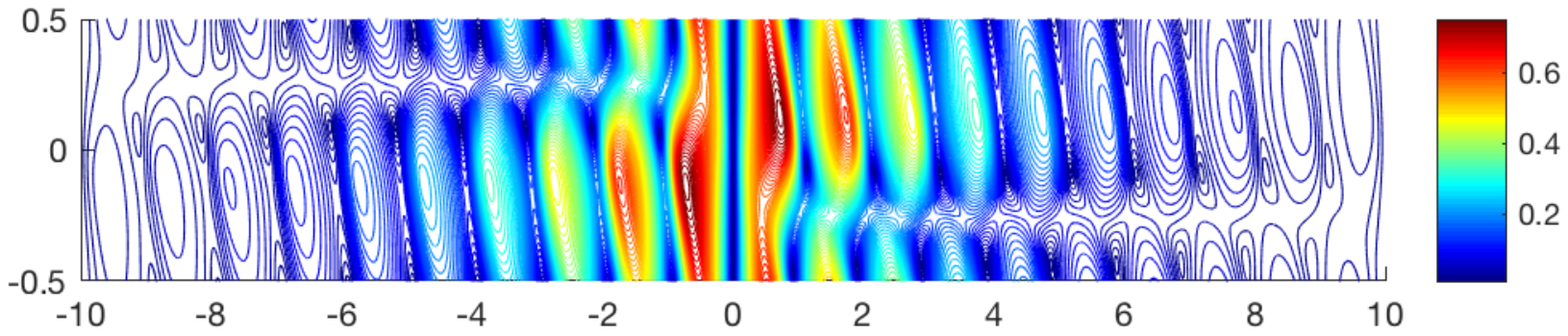}}
  \subcaptionbox{The $21$st eigenfunction\label{fig:LE_pbct21}}
  {\includegraphics[width=0.7\textwidth]{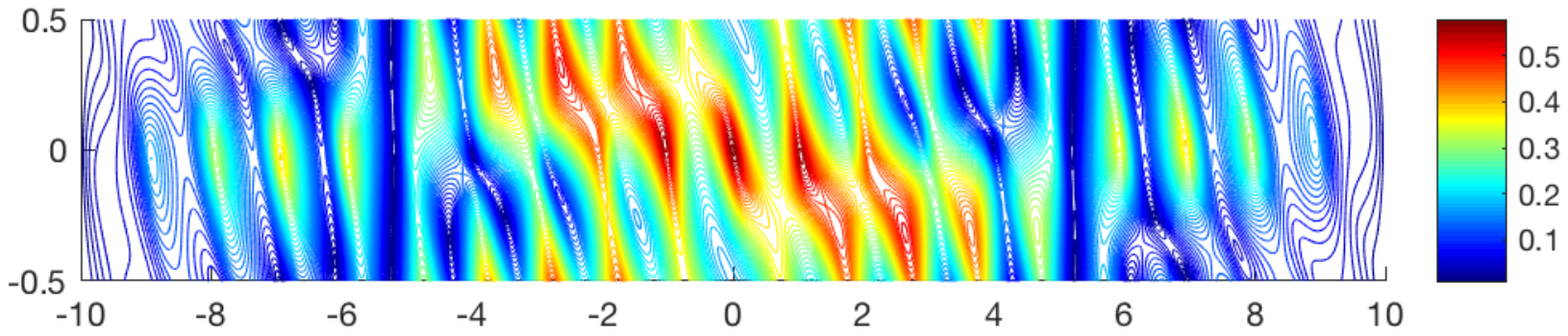}}
   \caption{Contour of the module of the eigenfunctions computed by gradient recovery method with $L=10$ for the $\mathcal{P}$-symmetry breaking case \eqref{eq:C2_1}-\eqref{eq:C2_2} when $k_{\parallel} = \frac{2\pi}{3}$. We choose $\bv_2$ as $x$-axis and $\bv_1$ as $y$-axis.
The $20$th eigenfunction is the edge mode, which is periodic in $\bv_1$ and  localized at the center along $\bv_2$.}\label{fig:LE_pbct2}
\end{figure}

\subsection{Computation of edge modes for $\mathcal{C}$-symmetry breaking} We consider the $\mathcal{C}$-symmetry breaking case.
Specifically,
\begin{align}\label{eq:C_break1}
&A(\mathbf{x}) = [4 - \cos(\mathbf{x}\cdot \mathbf{k}_1) -\cos(\mathbf{x}\cdot \mathbf{k}_2)
-\cos(\mathbf{x}\cdot \mathbf{k}_3)]I_{_{2\times2}}, \\
&B(\mathbf{x}) = [\cos(\mathbf{x}\cdot \mathbf{k}_1) +\cos(\mathbf{x}\cdot \mathbf{k}_2)
+\cos(\mathbf{x}\cdot \mathbf{k}_3)]\sigma_2,\label{eq:C_break2}
\end{align}
and the parameter $\delta = 1$.
In Figure \ref{fig:JM_emlen20pb}, we plot the first twenty-five eigenvalues  $\widehat{E}_{i,h}$ in terms of $k_{\mathbin{\|}}$.
At the point $k_{\mathbin{\|}} = \frac{2\pi}{3}$, we observe that the $19^{\text{th}}$, $20^{\text{th}}$, and $21^{\text{st}}$ eigenvalues are isolated from
other eigenvalues.  It looks like there are three edge modes. To investigate the situation, we
graph the contour of the module of the those eigenfunctions in Figure \ref{fig:JM_pbct2}.
From Figure \ref{fig:JM_pbct2}, the $19^{\text{th}}$ and $20^{\text{th}}$ eigenfunctions are localized at the boundary
but the $21^{\text{st}}$ eigenfunction is localized at the center. Based on the analysis in \cite{LWZ2017Honeycomb},
the $19^{\text{th}}$ and $20^{\text{th}}$ eigenfunctions are the pseudo edge modes and the only edge mode is the $21$st eigenfunction.

\begin{figure}
  \centering
    \includegraphics[width=0.7\textwidth]{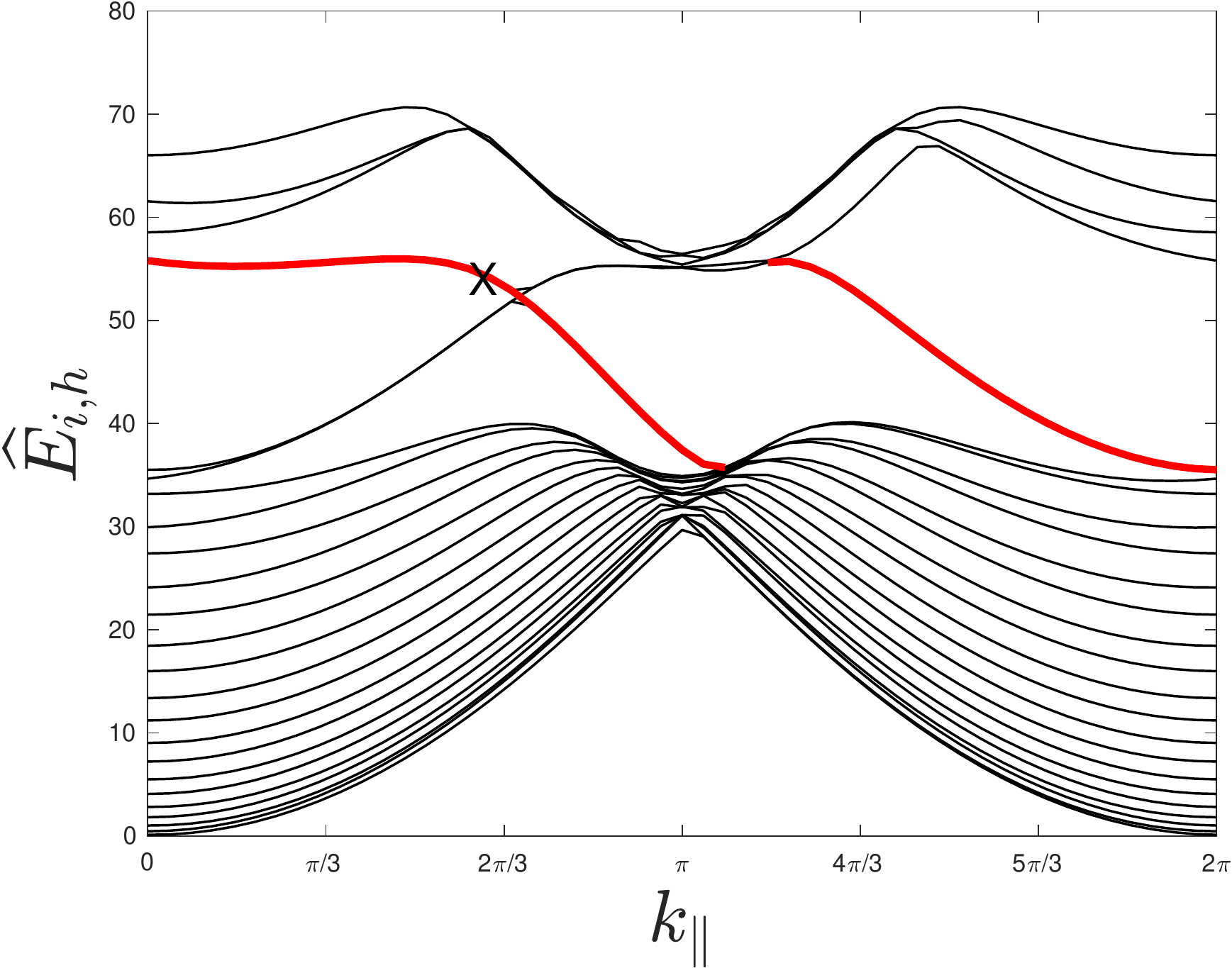}
   \caption{Eigenvalues computed by gradient recovery methods for the $\mathcal{C}$-symmetry breaking case \eqref{eq:C_break1}-\eqref{eq:C_break2} with $L=10$. The edge mode is corresponding to the line marked by `X'.}
\label{fig:JM_emlen20pb}
\end{figure}

\begin{figure}
   \centering
   \subcaptionbox{The $19$th eigenfunction\label{fig:JM_pbct19}}
  {\includegraphics[width=0.475\textwidth]{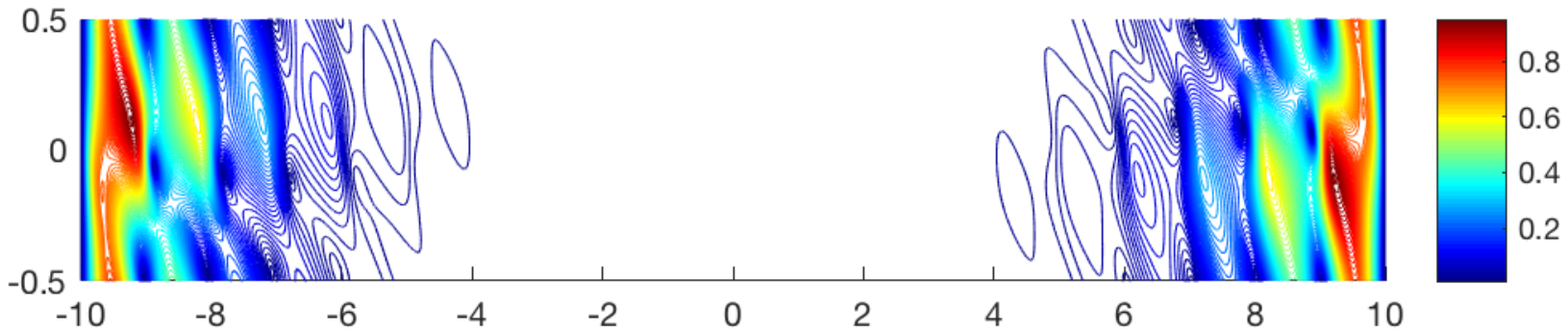}}
  \subcaptionbox{The $20$th eigenfunction\label{fig:JM_pbct20}}
   {\includegraphics[width=0.475\textwidth]{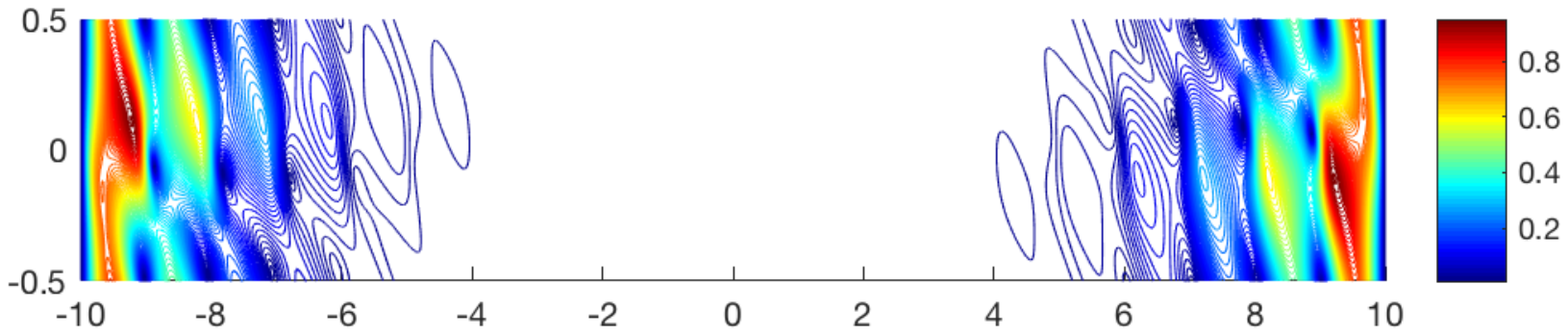}}
  \subcaptionbox{The $21$st eigenfunction\label{fig:JM_pbct21}}
  {\includegraphics[width=0.475\textwidth]{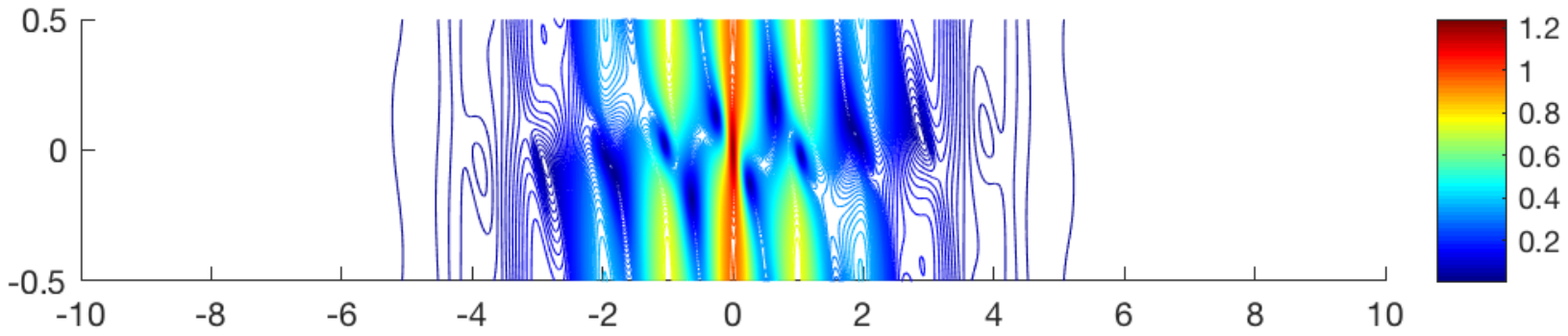}}
   \subcaptionbox{The $22$th eigenfunction\label{fig:JM_pbct22}}
  {\includegraphics[width=0.475\textwidth]{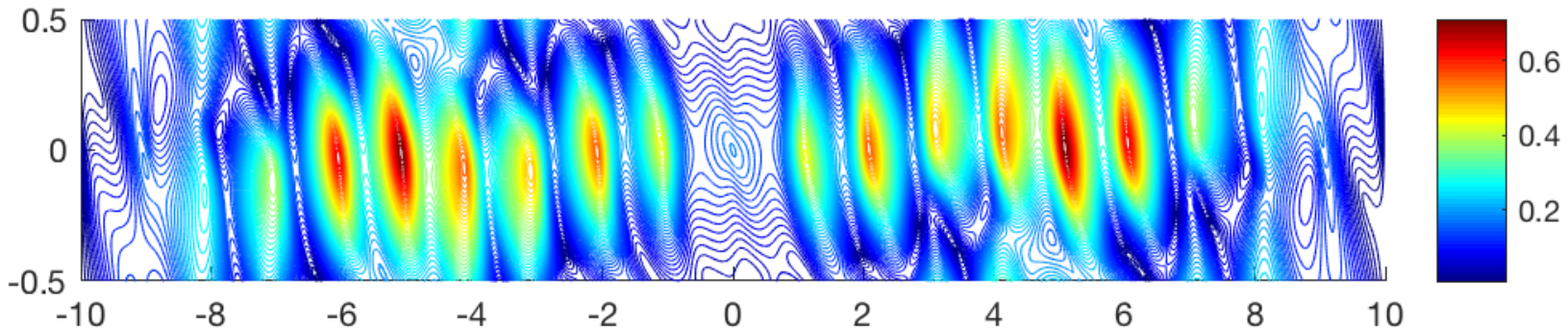}}
   \caption{Contour of the module of the eigenfunctions computed by gradient recovery method with $L=10$ for the $\mathcal{C}$-symmetry breaking case \eqref{eq:C_break1}-\eqref{eq:C_break2} when $k_{\parallel} = \frac{2\pi}{3}$. We choose $\bv_2$ as $x$-axis and $\bv_1$ as $y$-axis.
The $21$st eigenfunction is the edge mode, which is periodic in $\bv_1$ and  localized at the center along $\bv_2$.}\label{fig:JM_pbct2}
\end{figure}

\subsection{Computation of the edge mode in the anisotropic case with $\mathcal{C}$-symmetry breaking}
In this subsection, we consider the numerical results with anisotropic coefficients.
Specifically,   $A(\bx)$ is given in \eqref{honeycombpoten}
with
\begin{equation}\label{anisotropic1}
a_0 = 10, \quad
C =
\begin{pmatrix}
 -1&2\\
 0&-2
\end{pmatrix},
\end{equation}
\begin{equation}\label{anisotropic2}
B(\bx)=[\cos(\mathbf{x}\cdot \mathbf{k}_1) +\cos(\mathbf{x}\cdot \mathbf{k}_2)
+\cos(\mathbf{x}\cdot \mathbf{k}_3)]\sigma_2,
\end{equation}
and the parameter $\delta = 1$.
In Figure \ref{fig:aniso}, we plot the first twenty-five eigenvalues  $\widehat{E}_{i,h}$ in terms of $k_{\mathbin{\|}}$.
Similar to the numerical results in previous section, we observe that $19^{\text{th}}$, $20^{\text{th}}$, and $21^{\text{st}}$ eigenvalues are isolated from
other eigenvalues at $k_{\parallel}= \frac{2\pi}{3}$.

The red curve is the curve corresponding to the $21^{\text{st}}$ eigenvalue.
In Figures \ref{fig:aniso_cont},  we draw the contour plot of the  corresponding eigenfunctions when $k_{\mathbin{\|}}= \frac{2\pi}{3}$. We can see that the eigenfunctions corresponding to the  $19^{\text{th}}$ and $20^{\text{th}}$ eigenvalues are localized at the boundary, while
the eigenfunction corresponding to the $21^{\text{st}}$ eigenvalue is localized at the center which is the edge mode.

\begin{figure}
  \centering
    \includegraphics[width=0.7\textwidth]{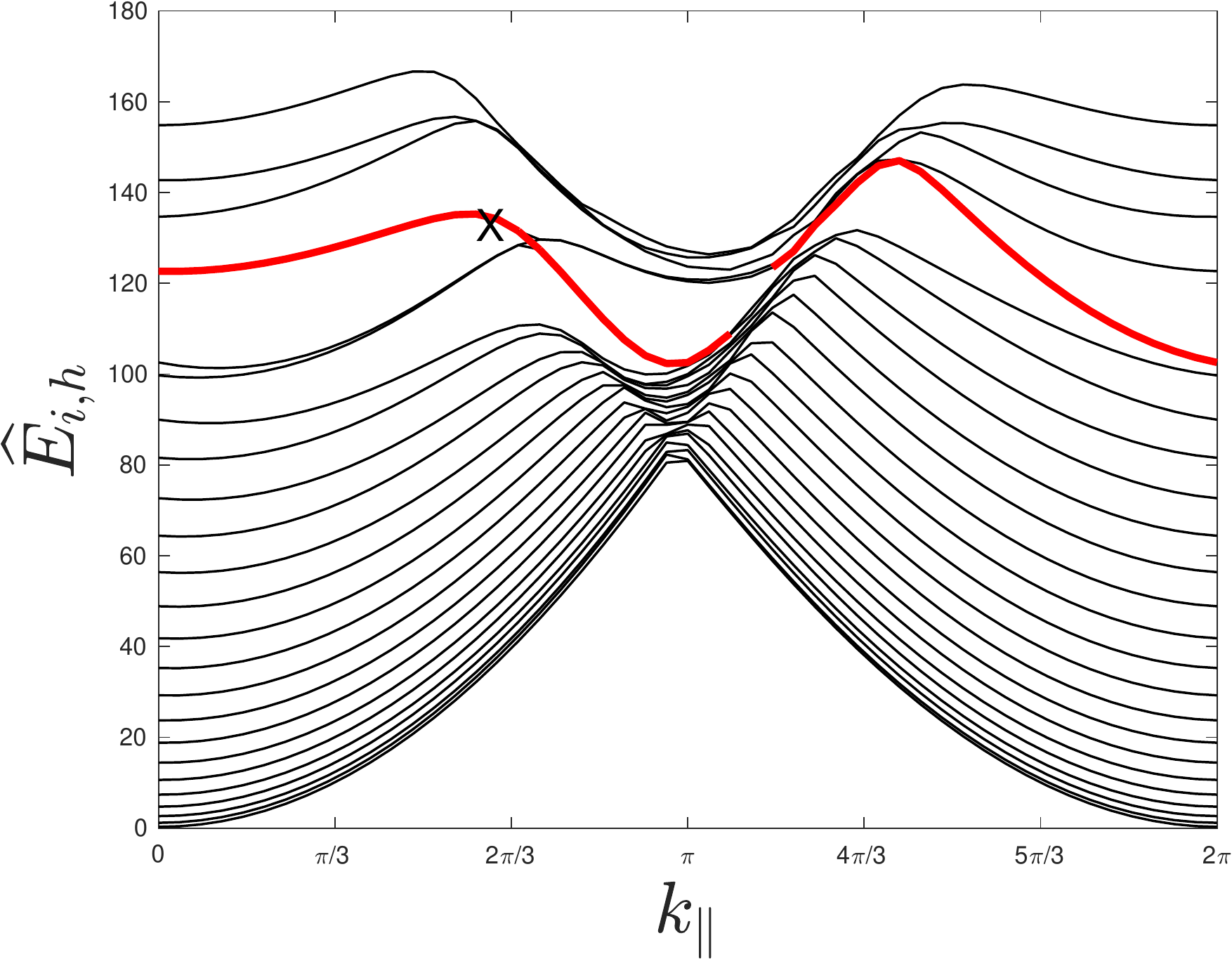}
   \caption{Eigenvalues computed by gradient recovery methods for the anisotropic $\mathcal{C}$-symmetry breaking case \eqref{anisotropic1}-\eqref{anisotropic2} with $L=10$. The edge mode is corresponding to the line marked by `X'.}
\label{fig:aniso}
\end{figure}

\begin{figure}
   \centering
   \subcaptionbox{The $19$th eigenfunction\label{fig:aniso_cont19}}
  {\includegraphics[width=0.475\textwidth]{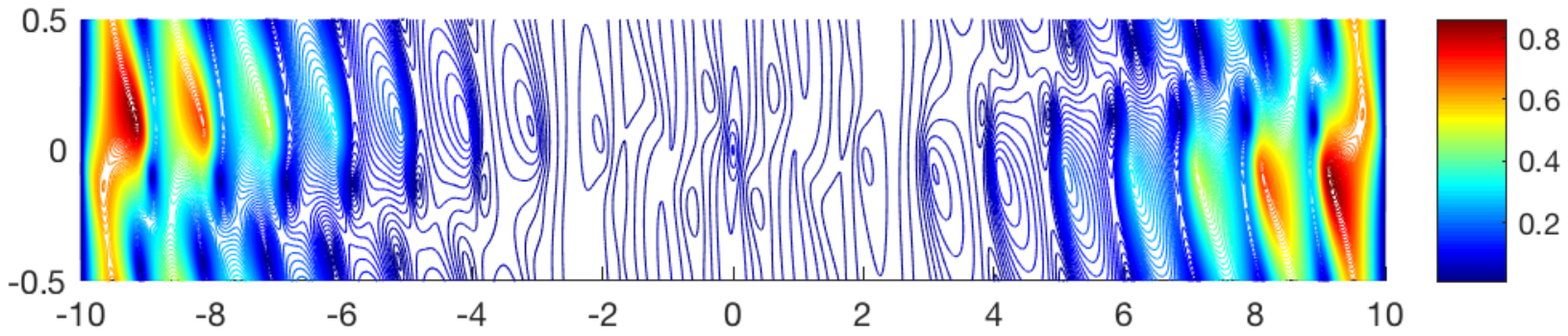}}
  \subcaptionbox{The $20$th eigenfunction\label{fig:aniso_cont20}}
   {\includegraphics[width=0.475\textwidth]{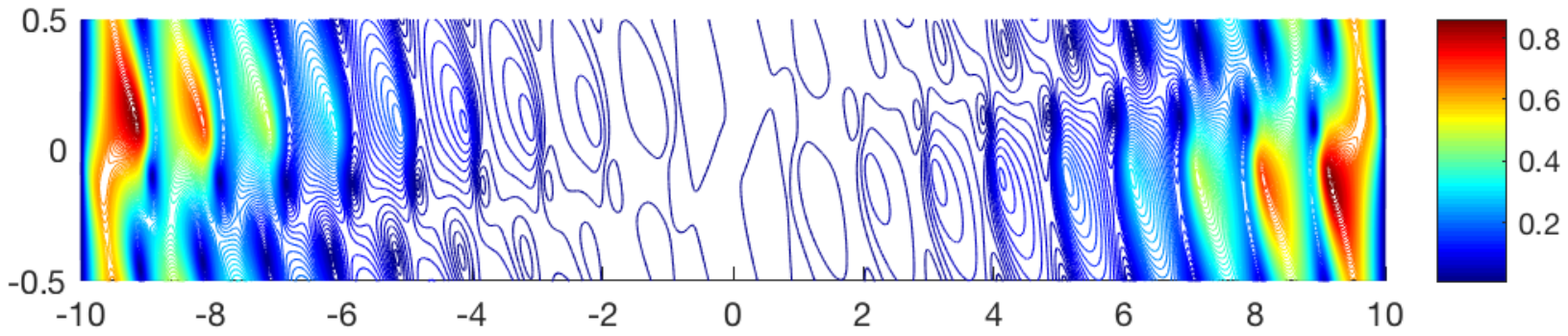}}
  \subcaptionbox{The $21$st eigenfunction\label{fig:aniso_cont21}}
  {\includegraphics[width=0.475\textwidth]{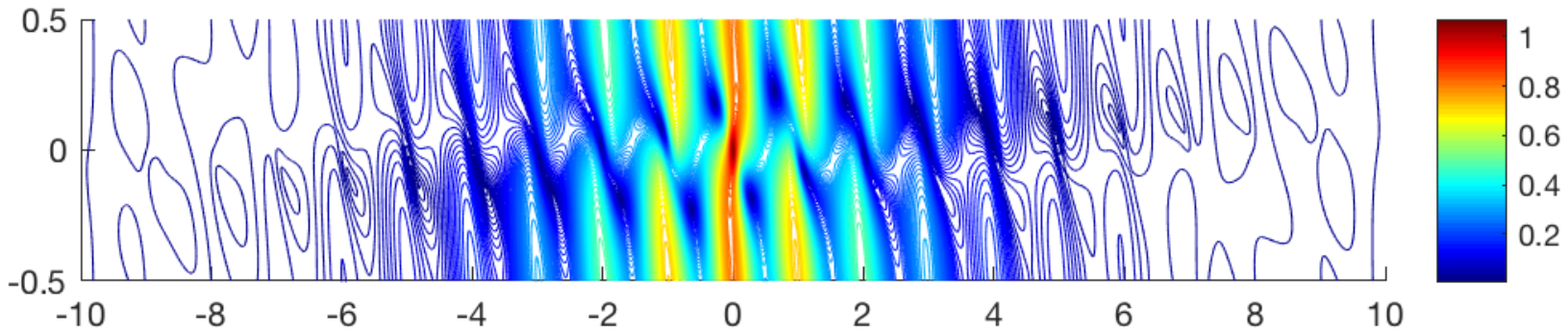}}
   \subcaptionbox{The $22$th eigenfunction\label{fig:aniso_cont22}}
  {\includegraphics[width=0.475\textwidth]{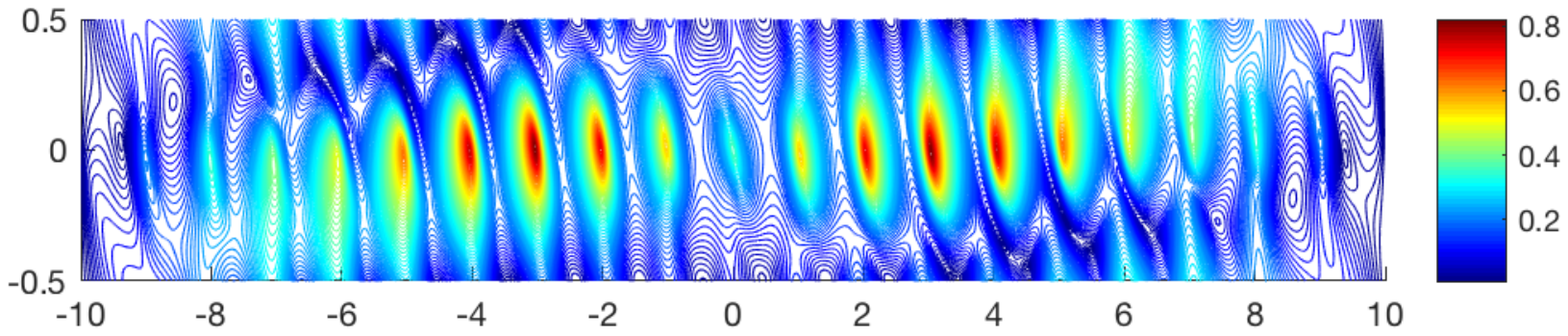}}
   \caption{Contour of the module of the eigenfunctions computed by gradient recovery method with $L=10$ for the anisotropic $\mathcal{C}$-symmetry breaking case \eqref{anisotropic1}-\eqref{anisotropic2} when $k_{\parallel} = \frac{2\pi}{3}$. We choose $\bv_2$ as $x$-axis and $\bv_1$ as $y$-axis.
The $21$st eigenfunction is the edge mode, which is periodic in $\bv_1$ and  localized at the center along $\bv_2$.}\label{fig:aniso_cont}
\end{figure}

\section{Conclusion}\label{sec:conclusion}


Photonic graphene is an ``artificial graphene" which admits subtle propagating modes of electromagnetic waves. It is also an important topological material which supports topological edge states. These states propagates along the edge without any back scattering when passing through a defect. So they have wide applications in many optical systems. Unfortunately, only few analytical results which work in a very narrow parameter regime can been obtained, see for example \cite{LWZ2017Honeycomb}. How to numerically compute these modes and associated gradients accurately to construct the whole electromagnetic fields under propagation is a very important question in applications. To solve this problem, we propose a novel superconvergent finite element method based on Bloch theory and gradient recovery techniques for the computation of such states in photonic graphene with a domain wall modulation. We analyze the accuracy of this method and show its efficiency by computing the $\mathcal{P}$-symmetry and $\mathcal{C}$-symmetry breaking cases in honeycomb structures. Our numerical results are consistent with the analysis in \cite{LWZ2017Honeycomb}. At present, this work only focuses on the static modes. In the future, we shall study the dynamics of such modes. This requires us to (1) recover the full electromagnetic fields from these modes computed by the superconvergent finite element method; (2) compute the time evolution equation (Maxwell equation). How to utilize the high accurate edge states to perform their dynamics will be further investigated.

\section*{Acknowledgments}
This work was supported by the National Natural Science Foundation of China under grant  11871299, NSF grants DMS-1418936 and DMS-1818592,  Andrew Sisson Fund of the University of Melbourne, and Tsinghua University Initiative Scientific Research Program (Grant 20151080424).

\bibliographystyle{siam}
\bibliography{mybibfile,XYpaper}

\begin{thebibliography}{10}

\bibitem{Ablowitz_Curtis_Zhu_2012}
{\sc M.~J. Ablowitz, C.~W. Curtis, and Y.~Zhu}, {\em On tight binding
  approximations in optical lattices}, Stud. Appl. Math., 129 (2012),
  pp.~362--388.

\bibitem{ablowitz2009conical}
{\sc M~J Ablowitz, S~D Nixon, and Y~Zhu}, {\em Conical diffraction in honeycomb
  lattices}, Physical Review A, 79 (2009), p.~053830.

\bibitem{Ablowitz_Zhu_HC_12}
{\sc M.~J. Ablowitz and Y.~Zhu}, {\em Nonlinear waves in shallow honeycomb
  lattices}, SIAM J. Appl. Math., 72 (2012), pp.~240--260.

\bibitem{AinsworthOden2000}
{\sc M.~Ainsworth and J.~T. Oden}, {\em A posteriori error estimation in finite
  element analysis}, Pure and Applied Mathematics (New York),
  Wiley-Interscience [John Wiley \& Sons], New York, 2000.

\bibitem{Axler2015}
{\sc S.~Axler}, {\em Linear algebra done right}, Undergraduate Texts in
  Mathematics, Springer, Cham, third~ed., 2015.

\bibitem{Babuska2001}
{\sc I.~Babu{\v{s}}ka and T.~Strouboulis}, {\em The finite element method and
  its reliability}, Numerical Mathematics and Scientific Computation, The
  Clarendon Press, Oxford University Press, New York, 2001.

\bibitem{Babuska1994}
{\sc I.~Babu{\v{s}}ska, T.~Strouboulis, C.~S. Upadhyay, S.~K. Gangaraj, and
  K.~Copps}, {\em Validation of a posteriori error estimators by numerical
  approach}, Internat. J. Numer. Methods Engrg., 37 (1994), pp.~1073--1123.

\bibitem{BeLiPa:78}
{\sc A.~Bensoussan, J.~L. Lions, and G.~Papanicolaou}, {\em Asymptotic analysis
  for periodic structures}, vol.~5 of Studies in Mathematics and its
  Applications, North-Holland, Amsterdam-New York, 1978.

\bibitem{Carstensen2002}
{\sc C.~Carstensen and S.~Bartels}, {\em Each averaging technique yields
  reliable a posteriori error control in {FEM} on unstructured grids. {I}.
  {L}ow order conforming, nonconforming, and mixed {FEM}}, Math. Comp., 71
  (2002), pp.~945--969.

\bibitem{ChenGuoZhangZou2017}
{\sc H.~Chen, H.~Guo, Z.~Zhang, and Q.~Zou}, {\em A {$C^0$} linear finite
  element method for two fourth-order eigenvalue problems}, IMA J. Numer.
  Anal., 37 (2017), pp.~2120--2138.

\bibitem{FW:12}
{\sc C~L Fefferman and M~I Weinstein}, {\em Honeycomb lattice potentials and
  dirac points}, Journal of the American Mathematical Society, 25 (2012),
  pp.~1169--1220.

\bibitem{Perotto2001}
{\sc L.~Formaggia and S.~Perotto}, {\em New anisotropic a priori error
  estimates}, Numer. Math., 89 (2001), pp.~641--667.

\bibitem{Perotto2003}
\leavevmode\vrule height 2pt depth -1.6pt width 23pt, {\em Anisotropic error
  estimates for elliptic problems}, Numer. Math., 94 (2003), pp.~67--92.

\bibitem{geim2007rise}
{\sc A.~K. Geim and K.~S. Novoselov}, {\em The rise of graphene}, Nature
  materials, 6 (2007), pp.~183--191.

\bibitem{GuoYang2017a}
{\sc H.~Guo and X.~Yang}, {\em Polynomial preserving recovery for high
  frequency wave propagation}, J. Sci. Comput., 71 (2017), pp.~594--614.

\bibitem{GuoZhangZhao2017}
{\sc H.~Guo, Z.~Zhang, and R.~Zhao}, {\em Superconvergent two-grid methods for
  elliptic eigenvalue problems}, J. Sci. Comput., 70 (2017), pp.~125--148.

\bibitem{GZZ2018}
{\sc H.~Guo, Z.~Zhang, and Q.~Zou}, {\em A {$C^0$} linear finite element method
  for biharmonic problems}, J. Sci. Comput., 74 (2018), pp.~1397--1422.

\bibitem{GZZ2018b}
\leavevmode\vrule height 2pt depth -1.6pt width 23pt, {\em A {$C^0$} linear
  finite element method for sixth order elliptic equations}, 2018,
  arXiv:1804.03793 [math.NA].

\bibitem{HuangRussell2011}
{\sc W.~Huang and R.~D. Russell}, {\em Adaptive moving mesh methods}, vol.~174
  of Applied Mathematical Sciences, Springer, New York, 2011.

\bibitem{Joannopoulos:08}
{\sc J.~D. Joannopoulos, S.~G. Johnson, J.~N. Winn, and R.~D. Meade}, {\em
  Photonic Crystals: Molding the Flow of Light}, Princeton University Press,
  second edition~ed., 2008.

\bibitem{Shvets-PTI:13}
{\sc A.~B. Khanikaev, S.~H. Mousavi, W.-K. Tse, M.~Kargarian, A.~H. MacDonald,
  and G.~Shvets}, {\em Photonic topological insulators}, Nature materials, 12
  (2013), pp.~233--239.

\bibitem{LWZ2017Honeycomb}
{\sc J.~P. Lee-Thorp, M.~I. Weinstein, and Y.~Zhu}, {\em Elliptic operators
  with honeycomb symmetry: Dirac points, edge states and applications to
  photonic graphene}, Arch. Ration. Mech. Anal.,  (to appear).

\bibitem{LinXu1985}
{\sc Q.~Lin and J.~Xu}, {\em Linear finite elements with high accuracy}, J.
  Comput. Math., 3 (1985), pp.~115--133.

\bibitem{lu2014topological}
{\sc L~Lu, J~D Joannopoulos, and M~Solja{\v{c}}i{\'c}}, {\em Topological
  photonics}, Nature Photonics, 8 (2014), pp.~821--829.

\bibitem{lu2016topological}
{\sc Ling Lu, John~D Joannopoulos, and Marin Soljacic}, {\em Topological states
  in photonic systems}, Nature Physics, 12 (2016), pp.~626--629.

\bibitem{MKW:15}
{\sc S~H Mousavi, A~B Khanikaev, and Z~Wang}, {\em Topologically protected
  elastic waves in phononic metamaterials}, Nature communications, 6 (2015).

\bibitem{NagaZhang2004}
{\sc A.~Naga and Z.~Zhang}, {\em A posteriori error estimates based on the
  polynomial preserving recovery}, SIAM J. Numer. Anal., 42 (2004),
  pp.~1780--1800 (electronic).

\bibitem{NagaZhang2005}
\leavevmode\vrule height 2pt depth -1.6pt width 23pt, {\em The
  polynomial-preserving recovery for higher order finite element methods in
  2{D} and 3{D}}, Discrete Contin. Dyn. Syst. Ser. B, 5 (2005), pp.~769--798.

\bibitem{NagaZhang2012}
\leavevmode\vrule height 2pt depth -1.6pt width 23pt, {\em Function value
  recovery and its application in eigenvalue problems}, SIAM J. Numer. Anal.,
  50 (2012), pp.~272--286.

\bibitem{NagaZhangZhou2006}
{\sc A.~Naga, Z.~Zhang, and A.~Zhou}, {\em Enhancing eigenvalue approximation
  by gradient recovery}, SIAM J. Sci. Comput., 28 (2006), pp.~1289--1300.

\bibitem{Segev07prl}
{\sc O.~Peleg, G.~Bartal, B.~Freedman, O.~Manela, M.~Segev, and D.~N.
  Christodoulides}, {\em Conical diffraction and gap solitons in honeycomb
  photonic lattices}, Phys. Rev. Lett., 98 (2007), p.~103901.

\bibitem{Rechtsman-etal:13}
{\sc M~C Rechtsman, Y~Plotnik, J~M Zeuner, D~Song, Z~Chen, A~Szameit, and
  M~Segev}, {\em Topological creation and destruction of edge states in
  photonic graphene}, Physical review letters, 111 (2013), p.~103901.

\bibitem{Singha_11}
{\sc A.~Singha, M.~Gibertini, B.~Karmakar, S.~Yuan, M.~Polini, G.~Vignale,
  M.~I. Katsnelson, A.~Pinczuk, L.~N. Pfeiffer, K.~W. West, and V.~Pellegrini},
  {\em Two-dimensional mott-hubbard electrons in an artificial honeycomb
  lattice}, Science, 332 (2011), pp.~1176--1179.

\bibitem{Strang2008}
{\sc G.~Strang and G.~Fix}, {\em An analysis of the finite element method},
  Wellesley-Cambridge Press, Wellesley, MA, second~ed., 2008.

\bibitem{WuZhang2009}
{\sc H.~Wu and Z.~Zhang}, {\em Enhancing eigenvalue approximation by gradient
  recovery on adaptive meshes}, IMA J. Numer. Anal., 29 (2009), pp.~1008--1022.

\bibitem{wu2015scheme}
{\sc L.-H. Wu and X.~Hu}, {\em Scheme for achieving a topological photonic
  crystal by using dielectric material}, Physical review letters, 114 (2015),
  p.~223901.

\bibitem{yang2015topological}
{\sc Z.~Yang, F.~Gao, X.~Shi, X.~Lin, Z.~Gao, Y.~Chong, and B.~Zhang}, {\em
  Topological acoustics}, Physical review letters, 114 (2015), p.~114301.

\bibitem{ZhangNaga2005}
{\sc Z.~Zhang and A.~Naga}, {\em A new finite element gradient recovery method:
  superconvergence property}, SIAM J. Sci. Comput., 26 (2005), pp.~1192--1213
  (electronic).

\bibitem{ZZ1987}
{\sc O.~C. Zienkiewicz and J.~Z. Zhu}, {\em A simple error estimator and
  adaptive procedure for practical engineering analysis}, Internat. J. Numer.
  Methods Engrg., 24 (1987), pp.~337--357.

\bibitem{ZZ1992a}
\leavevmode\vrule height 2pt depth -1.6pt width 23pt, {\em The superconvergent
  patch recovery and a posteriori error estimates. {I}. {T}he recovery
  technique}, Internat. J. Numer. Methods Engrg., 33 (1992), pp.~1331--1364.

\bibitem{ZZ1992b}
\leavevmode\vrule height 2pt depth -1.6pt width 23pt, {\em The superconvergent
  patch recovery and a posteriori error estimates. {II}. {E}rror estimates and
  adaptivity}, Internat. J. Numer. Methods Engrg., 33 (1992), pp.~1365--1382.

\end{thebibliography}
\end{document}